\documentclass[12pt,reqno]{amsart}
\usepackage{amssymb,amsmath,amscd}
\usepackage{color}
\marginparwidth 0pt   \marginparsep 0pt
\oddsidemargin -0.1in \evensidemargin 0pt
\topmargin -.3in
\textwidth 6.5in
\textheight 8.5in
\newtheorem{theorem}{Theorem}

\newtheorem{thm}{Theorem}
\newtheorem{lem}[thm]{Lemma}

\def\Neg{\mathop{\rm col}\nolimits}
\newcommand\DES{{\textsc{DES}}}

\newcommand\remark{\noindent{\bf Remark.}\kern 5pt}
\newcommand\example{\noindent{\bf Example.}\kern 5pt}
\def\si{\sigma }
\def\S{{\mathcal S}}

\newcommand\exc{\mathop{\rm exc}}

\newcommand\Der{{\mathop{\rm Der\kern 0.5pt}}}

\newcommand\ZDer{{\mathop{\rm ZDer\kern 0.5pt}}}
\newcommand\fix{\mathop{\rm fix}}
\newcommand\SIGN{{\textsc{COL}}}
\newcommand\FIX{\mathop{\rm FIX}}

\newcommand\Pos{\mathop{ \rm Pil}}
\newcommand\Pil{\mathop{ \rm Pil}}
\newcommand\Zero{\mathop{ \rm Zero}}
\newcommand\zero{\mathop{ \rm zero}}
\newcommand\rank{\mathop{ \rm red}}

\newcommand\First{ \mathop{\rm L}}
\newcommand\Last{ \mathop{\rm R}}

\def\D{\mathcal{D}}

\def\gln{G_{\ell,n}}
\def\sln{\Sigma_{\ell,n}}

\def\ns{ non-subexcedent }
\def\F{{\bf F}}
\def\f{{\bf\Phi^{-1}}}
\def\ph{{\bf\Phi}}

\def\N{\mathbb{N}}

\def\k+1th{(k+1)^{th}}
\def\dln{\mathcal{D}_{\ell,n}}
\def\dlk{\mathcal{D}_{\ell,k}}

\DeclareMathOperator{\dess}{des}
\DeclareMathOperator{\maj}{maj}
\DeclareMathOperator{\maf}{maf}

\DeclareMathOperator{\fmaj}{fmaj}
\DeclareMathOperator{\fmaf}{fmaf}

\DeclareMathOperator{\Sh}{Sh}

\def\maj{\mathop{\rm maj}\nolimits}
\def\fix{\mathop{\rm fix}\nolimits}
\def\maf{\mathop{\rm maf}\nolimits}

\def\pil{\mathop{\rm pil}\nolimits}
\def\des{\mathop{\rm des}\nolimits}
\def\desl{\mathop{\rm des_\ell}\nolimits}
\def\FIX{\mathop{\rm FIX}\nolimits}


\def\il{\bigl]\kern-.55em\bigl]}
\def\ir{\bigr]\kern-.55em\bigr]}

\def\p-{\Phi^{-1}}
\def\rank{\textrm{rank}}
\numberwithin{equation}{section}
\title{Fix-Euler-Mahonian statistics on wreath products}
\author{Hilarion L. M. Faliharimalala}
\address{D\'epartement de Math\'ematiques et Informatique,
        Universit\'e d'Antananarivo, 101 Antananarivo, Madagascar}
\email{heritianamihanta@yahoo.fr}
\author{Jiang Zeng}
\address{Universit\'e de Lyon, Universit\'e Lyon 1, Institut Camille
Jordan, UMR 5208 du CNred, F-69622, Villeurbanne Cedex, France}
\email{zeng@math.univ-lyon1.fr}
\begin{document}
\begin{abstract}
In 1997 Clarke et al. studied  a $q$-analogue of Euler's difference table
for $n!$ using a key bijection $\Psi$ on symmetric groups.
In this paper we extend their results to  the
wreath product   of a  cyclic group with the symmetric group.
In particular  we obtain a new Mahonian  statistic \emph{fmaf}  on wreath products.
We also show  that Foata and Han's two recent transformations on the symmetric groups
 provide indeed a factorization of $\Psi$.
 \end{abstract}
\date{\today}
\maketitle
\centerline{\emph{Dedicated to Dennis Stanton on the occasion of his 60th birthday}}
\section{Introduction}
For positive integers $\ell, n\geq 1$, define the colored set
$\Sigma_{\ell,n}=\{\zeta^jk \,|\,0\leq j\leq \ell-1, 1\leq k \leq n \}$.
 Let $G_{\ell,n}$  be  the set of  permutations $\sigma$  of 
$\Sigma_{\ell,n}$
   such that  $ \si(\zeta^j\, k)=\zeta^j\si(k)$ for any $k\in [n]$ and $0\leq j\leq \ell-1$.
It is known that
      $G_{\ell,n}$  is isomorphic to the wreath product  $C_\ell \wr S_n$, where $S_n$ denotes 
       the group of permutations of $[n]:=\{1,2,\ldots, n\}$ and $C_\ell$   the $\ell$-cyclic group  generated by $\zeta = e^{2i \pi/ \ell}$.  
In the last two decades many authors
 have being  trying to extend various  enumerative results on symmetric groups
to other classical reflection groups, see
\cite{abr01,ar01, c06, fh08,hlr05,fz07} and the references cited there.
In  particular Adin and Roichman \cite{ar01}
 introduced the flag major index \emph{fmaj} on wreath products of a cyclic group with the symmetric group,
and
Haglund et al.~\cite{hlr05} proved that
\begin{equation}\label{haglund}
\sum_{\sigma \in G_{\ell,n}}q^{\fmaj\sigma}=[\ell]_q[2\ell]_q\cdots[n\ell]_q,
\end{equation}
where $[n]_{q}$ is the $q$-integer $1+q+\cdots +q^{n-1}$.
In this paper we shall consider
 some natural  generalizations of \eqref{haglund} by studying  a wreath product analogue of Euler's $q$-difference table
$\{g_{\ell,n}^m(q)\}_{m\geq n\geq 0}$ defined by the following recurrence:
\begin{align}\label{q-recurrence}
\left \lbrace
\begin{array}{ll}
g_{\ell,n}^n(q)= [\ell]_q[2\ell]_q\cdots[n\ell]_q,\\
g_{\ell,n}^{m}(q)=g_{\ell,n}^{m+1}(q)-q^{\ell(n-m-1)}g_{\ell,n-1}^{m}(q)\qquad
 (0\leq m\leq n-1).
\end{array}
\right.
\end{align}
For example, when
$\ell=1$ and $\ell=2$, the first values of  $g_{\ell,n}^m(q)$ are given as follows:
\begin{itemize}
\item $\ell=1$
{\small
$$\vcenter{
\hbox{$\begin{array}{c|cccccc}
\hbox{$n$}
\backslash\hbox{$m$}&0&1&&2&&3\\
\hline
0&1& & & & &\\
1&0&1&& && \\
2&q&q&&1+q&& \\
3&q[2]_q& q[3]_q&& q^3+2q^2+q&& (1+q)(1+q+q^2)\\
\end{array}$}
}$$}
\item $\ell= 2$
$$\vcenter{
\hbox{$\begin{array}{c|cccccc}
\hbox{$n$}
\backslash\hbox{$m$}&0&1&2&3&&\\
\hline
0&1& & & & &\\
1&q&[2]_q && &&\\
2&q[4]_q+q^2&q[4]_q+q^2[2]_q& [2]_q[4]_q& &&\\
3&g_{2,3}^{0}&g_{2,3}^{1}& g_{2,3}^{2} &[2]_q[4]_q[6]_q&&\\
\end{array}$}
}$$
where
\begin{align*}
g_{2,3}^2&=q^9+3q^7+5q^7+7q^6+8q^5+7q^4+5q^3+3q^2+q,\\
g_{2,3}^1&=q^9+3q^8+5q^7+6q^6+6q^5+5q^4+4q^3+3q^2+q,\\
g_{2,3}^0&=q^9+2q^8+4q^7+4q^6+5q^5+5q^4+4q^3+3q^2+q.
\end{align*}
\end{itemize}
It is remarkable that  $g_{\ell,n}^m(q)$ are polynomials in $q$ with non-negative integral  coefficients.
For $\ell=1$, Clarke et al.~ \cite{chz97}  proved that the entry $g_{1, n}^{m}(q)$ is  actually  the generating function  for a subset
of $S_n$ by  the Mahonian statistic  {\em maf}. Their proof is based on a key bijection $\Psi$
on $S_n$ transforming the statistic $\maf$ to the statistic  {\em major index}.
 Our first aim is  to  show that  the results in  \cite{chz97} can be readily extended
 to wreath products. More precisely, we will  find a combinatorial interpretation and an explicit formula for  $g_{\ell,n}^m(q)$ by  introducing   a new
Mahonian statistic \emph{fmaf} on the wreath products and  by extending  Clarke et al.'s bijection $\Psi$ to the colored permutations.
On the other hand, Foata and Han~\cite{fh08} have recently constructed two new transformations on symmetric groups and
 \emph{noticed}  that
the composition of their two  transformations has  some 
properties in common with  Clarke et al.'s  bijection $\Psi$ in \cite{chz97}. A natural question is then to ask wheather
these two algorithms are \emph{identical}.  Our second aim of this paper is  to settle this open question.

\section{Definitions and main results}
For $x=\zeta^jk\in \Sigma_{\ell,n}$,  we write $x=\varepsilon_x|x|$ with
 $$
  \varepsilon_x= \zeta^j\in C_{\ell}\quad\textrm{ and}\quad |x|=k \in [n].
 $$
If $\ell$ is small,  it is convenient to  write $j$ bars over $i$ instead of $\zeta^j i$, thus
 $\zeta^24=\overline{\overline{4}}$.
By  ordering  the elements of $C_\ell$ as
$1>\zeta>\zeta^2>\cdots >\zeta^{\ell-1}$, we
can define a linear order  on the alphabet
  $\Sigma_{\ell,n}$ as follows:
\begin{equation}\label{ordre}
\varepsilon_{x_1}|x_1|<\varepsilon_{x_2}|x_2| \Leftrightarrow [\varepsilon_{x_1}<\varepsilon_{x_2}] \text{ or } [(\varepsilon_{x_1}=\varepsilon_{x_1}) \text{ and } (|x_1|<|x_2|)].
\end{equation}
For example, the elements of $\Sigma_{4,4}$ are ordered as follows:
$$\bar{\bar{\bar{1}}}<\bar{\bar{\bar{2}}}<\bar{\bar{\bar{3}}}<\bar{\bar{\bar{4}}}<
\bar{\bar{1}}<\bar{\bar{2}}<\bar{\bar{3}}<\bar{\bar{4}}<
\bar{1}<\bar{2}<\bar{3}<\bar{4}<1<2<3<4.
$$
Recall that $i\in [n]$ is a {\it fixed point}
of $\sigma\in G_{\ell,n}$ if $\sigma(i)=i$.
Let $\FIX(\sigma)$  be the set of fixed
 points of $\sigma$ and  $\fix \sigma$ the cardinality of
$\FIX(\sigma)$. The colored permutation $\sigma$ has a \emph{descent} at
 $i\in\{1,2,\ldots, n-1\}$ if $\sigma(i)>\sigma(i+1)$ and
$i$ is called a  \emph{descent place} of $\sigma$.  Let $\DES(\sigma)$ be the set of descent places of $\sigma$.
The \emph{major index}
of $\sigma$, denoted by $\maj \sigma$, is the sum of
all the descent places of $\sigma$.
If $y_1  y_2 \cdots  y_m$ is the word obtained by deleting the fixed points of $\sigma$ and 
$y_i= \varepsilon_i |y_i|$ ($1\leq i\leq m$), writing
$z_i= \varepsilon_i\rank(|y_i|)$ 
 with  `$\rank$"  being
the increasing bijection from $\{|y_1|, |y_2|,\cdots ,|y_m| \}$ to $[m]$,
 then   the \emph{derangement part} of $\sigma$ is defined to be
 \begin{equation}\label{der}
 \Der(\sigma)= z_1z_2\cdots z_m.
 \end{equation}
We now define the main Eulerian and Mahonian statistics on the wreath product $\gln$.
 If $\sigma=x_1\ldots x_n\in \gln$ then the statistics $\desl$, $\exc$ and   $\Neg$  are  defined by
 \begin{align*}
\desl \sigma=\sum_{i=1}^{n-1}\chi(x_i>x_{i+1}),\quad
\exc \sigma=\sum_{i=1}^{n}\chi(x_i>i),\quad
\Neg\sigma=\sum_{j=0}^{\ell-1}j\cdot |\SIGN_j(\sigma)|,
\end{align*}
 where $ \SIGN_j(\sigma)=\{ i\in [n]:\frac{x_i}{|x_i|}=\zeta^j\}$ and $\chi(A)=1$ if $A$ is true and 0 otherwise.
 When $\ell=1$ we shall write $\textrm{des}=\desl$.
 The $\maj$ and $\maf$ statistics are defined by
\begin{align*}
 \maj\sigma=\sum_{i=1}^{n-1}i \cdot \chi(x_i>x_{i+1}),\quad \textrm{and}\quad
  \maf\sigma=\maj\Der(\sigma)+\sum _{j=1}^k(i_j-j),
 \end{align*}
 where $\FIX(\sigma)=\{i_1,\, i_2,\,\ldots, i_k\}$.
The  \emph{flag-maj}  $\fmaj$ and   \emph{flag-maf}  $\fmaf$ statistic   are  defined by
\begin{align*}
\fmaj\sigma=\ell\cdot \maj\sigma + \Neg(\sigma),\quad \textrm{and}\quad
\fmaf\sigma=\ell\cdot \maf\sigma + \Neg(\sigma).
\end{align*}
\remark
While the statistic $\fmaj$ was first introduced by Adin and Roichman~\cite{ar01}, the statistic $\fmaf$ is new and reduces to $\maf$ of Clarke et al.~\cite{chz97} for $\ell=1$.

\example
If $\sigma=\textbf{1}\,\bar{8}\,\textbf{3}\,\textbf{4}\,
6\bar{\bar{2}}\,\textbf{7}\,\bar{5}\,\textbf{9} \in G_{4,9}$,
then  $\maj \sigma=1+5+7=13$, $\FIX\sigma=\{ 1,3,4,7,9 \}$ and
$\Der(\sigma)=\bar{4}\,3\,\bar{\bar{1}}\,\bar{2}$.
 Therefore
 $\maj\Der(\sigma)=2$ and
 $$\maf\sigma=2+\left((1-1)+(3-2)+(4-3)+(7-4)+(9-5)\right)=11.
 $$
 Since
$\SIGN_0(\sigma)=\{1,3,4,5,7,9\},\,
\SIGN_1(\sigma)=\{2,8\},\, \SIGN_2(\sigma)=\{6\},\,\SIGN_3(\sigma)=\emptyset$,
we have  $\Neg(\sigma)=0\times6+1\times2+2\times1=4$.
Finally,
$$
\fmaj\sigma=4\times13+4=56,\quad \textrm{and}\quad
\fmaf\sigma=4\times11+4=48.
$$	

\begin{remark}
Other notions of descents have also been considered previously, see \cite{stein94}.
For the length function of  the wreath product  $C_\ell \wr S_n$  we refer the reader to
\cite{gm06}.
\end{remark}

We first show that the statistics $(\fix, \fmaf)$ and $(\fix,\fmaj)$ are equidistributed on $\gln$ and their common distribution 
has an explicit formula.
\begin{theorem}\label{mainthm1}
The triple statistics $(\fmaf,\exc,\fix)$ and $(\fmaj,\exc,\fix)$ are equidistributed on $\gln$.
Moreover the common generating function
\begin{align}\label{eq:fix-maf}
\sum_{\sigma \in G_{\ell,n}}q^{\fmaf\sigma}x^{\fix\sigma}=\sum_{\sigma \in G_{\ell,n}}q^{\fmaj\sigma}x^{\fix\sigma}
\end{align}
has the explicit formula
\begin{align}\label{eq:xhaglund}
g_{\ell,n}(q,x):=[\ell]_q[2\ell]_q\cdots[n\ell]_q \sum_{k=0}^n\frac{(x-1)(x-q^\ell)\cdots (x-q^{\ell(k-1)})}{[\ell]_q[2\ell]_q\cdots[k\ell]_q}.
\end{align}
\end{theorem}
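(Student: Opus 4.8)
The statement bundles two claims: the equidistribution of the triples $(\fmaf,\exc,\fix)$ and $(\fmaj,\exc,\fix)$, and the evaluation \eqref{eq:xhaglund} of their common bivariate generating function. The plan is to prove the equidistribution by a statistic-preserving bijection and to obtain \eqref{eq:xhaglund} by stripping fixed points off the $\fmaf$-side and running a short $q$-series induction.

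For the equidistribution I would extend Clarke et al.'s map $\Psi$ to a bijection $\Psi\colon\gln\to\gln$ and verify that it preserves $\exc$, $\fix$ and $\Neg$ while sending $\maf$ to $\maj$. Since $\fmaf=\ell\cdot\maf+\Neg$ and $\fmaj=\ell\cdot\maj+\Neg$, this at once gives $\fmaf\,\sigma=\fmaj\,\Psi(\sigma)$, hence $(\fmaf,\exc,\fix)\sim(\fmaj,\exc,\fix)$ and in particular the equality of the two sums in \eqref{eq:fix-maf}. The extension should reduce to the symmetric-group bijection of \cite{chz97} when $\ell=1$ and must be organized around the fixed points and the derangement part $\Der(\sigma)$, since $\maf$ is assembled from $\maj\Der(\sigma)$ and the fixed-point correction $\sum_j(i_j-j)$. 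Checking that the colored $\Psi$ simultaneously controls all of $\exc$, $\fix$, $\Neg$ and turns $\maf$ into $\maj$—with the colored order \eqref{ordre} entering the descent counts—is the main obstacle and the combinatorial core of the paper.

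For the formula I work on the $\fmaf$-side, where fixed points detach cleanly. Write $t=q^\ell$ and $P_m=[\ell]_q[2\ell]_q\cdots[m\ell]_q$, and split each $\sigma\in\gln$ according to its fixed-point set $\{i_1<\cdots<i_k\}$ and its derangement part $\tau=\Der(\sigma)$; because $\sigma$ restricted to its non-fixed letters is already a colored derangement of that set, $\tau$ ranges over all colored derangements of $[n-k]$ as $\sigma$ varies. Fixed points carry the trivial color, so $\Neg\sigma=\Neg\tau$, and the definition of $\maf$ yields $\fmaf\sigma=\fmaj\tau+\ell\sum_j(i_j-j)$ (with $\fmaj=\fmaf$ on derangements). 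Summing $t^{\sum_j(i_j-j)}$ over all $k$-subsets of $[n]$ gives the Gaussian binomial coefficient $\genfrac{[}{]}{0pt}{}{n}{k}_{t}$, whence
\begin{equation}\label{star}
G_n(q,x):=\sum_{\sigma\in\gln}q^{\fmaf\sigma}x^{\fix\sigma}=\sum_{k=0}^n x^k\,\genfrac{[}{]}{0pt}{}{n}{k}_{t}\,d_{\ell,n-k}(q),\qquad d_{\ell,m}(q):=\sum_{\tau}q^{\fmaj\tau},
\end{equation}
the inner sum running over colored derangements of $[m]$.

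It remains to evaluate \eqref{star}. Setting $x=1$ and combining the equidistribution with \eqref{haglund} gives $\sum_k\genfrac{[}{]}{0pt}{}{n}{k}_{t}d_{\ell,n-k}(q)=P_n$, a triangular system that determines the $d_{\ell,m}$ and delivers the $q$-derangement recurrence $d_{\ell,m}(q)=[m\ell]_q\,d_{\ell,m-1}(q)+(-1)^m t^{\binom{m}{2}}$. Feeding this back into \eqref{star}, using $\genfrac{[}{]}{0pt}{}{n}{k}_{t}[(n-k)\ell]_q=[n\ell]_q\genfrac{[}{]}{0pt}{}{n-1}{k}_{t}$ on the main term and the $q$-binomial theorem $\prod_{i=0}^{n-1}(x-t^i)=\sum_{k=0}^n(-1)^{n-k}t^{\binom{n-k}{2}}\genfrac{[}{]}{0pt}{}{n}{k}_{t}x^k$ on the remainder, one finds
\begin{equation}\label{rec}
G_n(q,x)=[n\ell]_q\,G_{n-1}(q,x)+\prod_{i=0}^{n-1}(x-q^{\ell i}),\qquad G_0(q,x)=1.
\end{equation}
A direct check shows that the right-hand side of \eqref{eq:xhaglund} obeys the same first-order recurrence—peel off its top ($k=n$) term and factor $[n\ell]_q=P_n/P_{n-1}$ out of the rest—so \eqref{eq:xhaglund} follows by induction on $n$. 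This last part is routine once \eqref{star} is established; the genuine difficulty lies entirely in constructing and verifying the colored bijection $\Psi$.
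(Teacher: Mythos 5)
Your equidistribution half is a plan rather than a proof, and this is a genuine gap: you say you \emph{would} extend Clarke et al.'s $\Psi$ to $\gln$ and that verifying it preserves $\exc$, $\fix$, $\Neg$ while sending $\maf$ to $\maj$ is ``the main obstacle'', but you never define the colored extension or carry out any verification. The missing content is exactly what the paper supplies, and it is short: insertion of a fixed point into a slot of a colored permutation is defined by shifting only absolute values, $x'=x$ if $|x|\le i$ and $x'=\varepsilon_x(|x|+1)$ if $|x|>i$, keeping colors; this substitution is compatible with the order \eqref{ordre} on $\sln$, so the descent sets of $x_1\cdots x_i$ and of $x_{i+1}\cdots x_n$ are unchanged by the insertion. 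Hence Clarke et al.'s slot lemma (Lemma~\ref{lem clark}), $\maj\langle\sigma,i\rangle=\maj\sigma+g_i$, holds verbatim for colored derangements, the inductive definition of $\Psi$ goes through on each fiber $G(\sigma,m)=\{\tau:\Der(\tau)=\sigma\}$, and since $\Psi$ fixes the derangement part it preserves $(\exc,\fix,\Neg)$ automatically, giving $\fmaf\Psi(\tau)=\ell\,\maf\Psi(\tau)+\Neg\Psi(\tau)=\ell\,\maj\tau+\Neg\tau=\fmaj\tau$. Note that this gap also undercuts your second half: your evaluation at $x=1$ invokes \eqref{haglund}, which is a statement about $\fmaj$, and transferring it to the $\fmaf$ side is precisely the unproven equidistribution.

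Granting the equidistribution, your derivation of \eqref{eq:xhaglund} is correct and takes a genuinely different route from the paper's. Both arguments start from the same fixed-point decomposition
\begin{equation*}
\sum_{\sigma\in\gln}q^{\fmaf\sigma}x^{\fix\sigma}=\sum_{k=0}^{n}x^{k}{n\brack k}_{q^{\ell}}\,d_{\ell,n-k}(q),
\end{equation*}
and both determine the $d_{\ell,m}(q)$ from the $x=1$ specialization. The paper then passes to $q$-exponential generating functions: it solves for $\sum_m d_{\ell,m}(q)u^m/(q^\ell;q^\ell)_m$, substitutes back to get the closed form \eqref{eq:f}, and matches it against the series of $g_{\ell,n}(q,x)$ via Cauchy's identity and Euler's formula \eqref{eq:euler}. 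You instead stay with finite identities: $q$-binomial inversion of your triangular system gives \eqref{(i)} and hence the recurrence $d_{\ell,m}(q)=[m\ell]_q\,d_{\ell,m-1}(q)+(-1)^m q^{\ell\binom{m}{2}}$, which turns the decomposition into your first-order recurrence $G_n(q,x)=[n\ell]_q\,G_{n-1}(q,x)+\prod_{i=0}^{n-1}\bigl(x-q^{\ell i}\bigr)$, visibly also satisfied by the right-hand side of \eqref{eq:xhaglund} (peel off its $k=n$ term and factor out $[n\ell]_q$); induction on $n$ finishes. Your two $q$-binomial manipulations, ${n\brack k}_{q^\ell}[(n-k)\ell]_q=[n\ell]_q{n-1\brack k}_{q^\ell}$ and the expansion of $\prod_{i=0}^{n-1}(x-q^{\ell i})$, check out. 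Your route is more elementary and avoids infinite series; the paper's buys, as a by-product, the product formula \eqref{eq:f} for the bivariate generating function.
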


\begin{remark}
When $\ell=1$
Gessel and Reutenauer~\cite{gr93}  first proved that 
 the generating function  of $(\fmaj, \fix)$ on $S_n$ is given by 
 \eqref{eq:xhaglund}. For general $\ell$ and 
 $x=1$  we recover  Haglund et al's formula~\eqref{haglund}  for the generating function of $\fmaj$ on $\gln$.
 For $x=0$,  we derive from \eqref{eq:fix-maf} and \eqref{eq:xhaglund}  an  explicit formula
for the colored $q$-{\em derangement number}:
\begin{align}\label{(i)}
d_{\ell,n}(q):= \sum_{\sigma\in \D_{\ell,n}}q^{\fmaj\sigma}
 =[\ell]_q[2\ell]_q\cdots[n\ell]_q
 \sum_{k=0}^n\frac{(-1)^kq^{\ell\binom{k}{2}}}{[\ell]_q[2\ell]_q\cdots[k\ell]_q}.
\end{align}
The  $\ell=1$ and $\ell=2$  cases of \eqref{(i)} were first obtained by Gessel  (unpublished) and Wachs~\cite{w89},  and Chow \cite {c06},
 respectively. Finally  \eqref{(i)} yields immediately the following recurrence relation for
 the colored $q$-derangemnt numbers:
\begin{align}\label{(iii)}
d_{\ell,n+1}(q)=
[\ell n+\ell]_qd_{\ell,n+1}(q)+(-1)^{n+1}q^{\ell\binom{n+1}{2}}.
\end{align}
\end{remark}

Introduce the $q$-shifted factorials 
$$
(a;q)_0=1,\quad (a;q)_n=\prod_{k=0}^{n-1}(1-aq^k),\quad n=1,2,\ldots,\,\textrm{or}\,\infty,
$$ 
 then  the $q$-binomial coefficients are defined by
 $$
 {n\brack m}_{q}=\frac{(q;q)_n}{(q;q)_m(q;q)_{n-m}}, \quad  n\geq m\geq 0.
 $$
Instead of the colored $q$-Euler table \eqref{q-recurrence},
 as Clarke et al.~\cite{chz97}, we can consider a more general  triangle than \eqref{q-recurrence} by taking
 $g_{\ell,n}^n(q,x):=g_{\ell,n}(q,x)$ as
  the diagonal coefficients and replace the recurrence relation in \eqref{q-recurrence} by
\begin{align}\label{q-general}
g_{\ell,n}^{m}(q,x)=g_{\ell,n}^{m+1}(q,x)-xq^{\ell(n-m-1)}g_{\ell,n-1}^{m}(q,x)\qquad
 (0\leq m\leq n-1).
\end{align}
For $n\geq m\geq 0$,  denote by  $G_{\ell,n}^m$
the set of permutations $\sigma$ in $G_{\ell,n}$ such that
  $\FIX(\sigma)\subset \{n-m+1, \ldots, n-1, n\}$, i.e., $\sigma(i)\neq i$ for $i\leq n-m$.
  In particular we have $G_{\ell,n}^n=G_{\ell,n}$ and
 $\dln:=G_{\ell,n}^0$  is the set of colored derangements  of order $n$.
The following theorem  gives a full description of $g_{\ell,n}^{m}(q,x)$, which  generalizes  the previous results  in \cite{chz97} for $\ell=1$
and \cite{fz07} for  $q=1$, respectively.
\begin{theorem}\label{mainthm2}
 For $n\geq m\geq  0$ we have
 the following explicit formula:
\begin{align}\label{explicitgeneral}
g_{\ell,n}^{m}(q,x)=\sum_{k=0}^{n-m}(-x)^{k}{n-m\brack
 k}_{q^\ell}q^{\ell{k\choose 2}}g_{\ell,n-k}(q,x)
\end{align}
and the combinatorial interpretation:
\begin{equation}\label{eq:eqcombi2}
 g_{\ell,n}^{m}(q,x)
=\sum_{\sigma \in G_{\ell,n}^m}q^{\fmaf\sigma}x^{\fix\sigma}.
\end{equation}
\end{theorem}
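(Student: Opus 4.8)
The plan is to prove both assertions at once via the standard device of showing that each candidate expression satisfies the defining recurrence \eqref{q-general} together with the diagonal initial condition $g_{\ell,n}^n(q,x)=g_{\ell,n}(q,x)$; since \eqref{q-general} determines the entire triangle from its diagonal, uniqueness then forces equality with $g_{\ell,n}^m(q,x)$. For the combinatorial side this initial condition is exactly Theorem~\ref{mainthm1}, because $G_{\ell,n}^n=G_{\ell,n}$.

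First I would establish \eqref{explicitgeneral} algebraically. Writing $N=n-m$ and substituting the proposed closed form into \eqref{q-general}, the term $-xq^{\ell(n-m-1)}g_{\ell,n-1}^{m}(q,x)$ contributes a sum which I reindex by $k\mapsto k+1$ so that every summand carries a common factor $g_{\ell,n-k}(q,x)$. Matching coefficients of $g_{\ell,n-k}(q,x)$ for each $k$, the extreme terms $k=0$ and $k=N$ check directly (the latter using $\binom{N-1}{2}+(N-1)=\binom{N}{2}$), while after cancelling $q^{\ell\binom{k}{2}}$ and simplifying $\binom{k-1}{2}-\binom{k}{2}=-(k-1)$ the interior terms $1\le k\le N-1$ reduce to the $q$-Pascal recurrence
\begin{equation*}
{N\brack k}_{q^\ell}={N-1\brack k}_{q^\ell}+q^{\ell(N-k)}{N-1\brack k-1}_{q^\ell}.
\end{equation*}
The case $m=n$ leaves only the $k=0$ summand, namely $g_{\ell,n}(q,x)$, so the initial condition holds and \eqref{explicitgeneral} follows.

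For \eqref{eq:eqcombi2} I would set $F_{\ell,n}^m(q,x)=\sum_{\sigma\in G_{\ell,n}^m}q^{\fmaf\sigma}x^{\fix\sigma}$ and verify the same recurrence. Since $G_{\ell,n}^m\subset G_{\ell,n}^{m+1}$, the difference $F_{\ell,n}^{m+1}-F_{\ell,n}^m$ runs exactly over those $\sigma\in G_{\ell,n}^{m+1}$ with $\sigma(n-m)=n-m$ and $\sigma(i)\neq i$ for $i<n-m$, so that $n-m$ is a forced fixed point. I would delete it: let $\sigma'\in G_{\ell,n-1}$ be obtained by removing position $n-m$ and decrementing every absolute value exceeding $n-m$. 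This is a bijection onto $G_{\ell,n-1}^m$, with inverse re-inserting the fixed point $n-m$. The decisive point is that $\Der(\sigma)$ is built only from the non-fixed entries and their colors, which are untouched by the deletion, so $\maj\Der$ is invariant; likewise the colors are preserved, so $\Neg$ is invariant, and $\fix\sigma=\fix\sigma'+1$. Writing $\FIX(\sigma)=\{i_1=n-m<i_2<\cdots<i_k\}$, the fixed points of $\sigma'$ in increasing order are $i'_j:=i_{j+1}-1$ for $1\le j\le k-1$, and a short computation gives $\sum_{j=1}^{k}(i_j-j)-\sum_{j=1}^{k-1}(i'_j-j)=i_1-1=n-m-1$. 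Hence $\fmaf\sigma=\fmaf\sigma'+\ell(n-m-1)$, and therefore
\begin{equation*}
F_{\ell,n}^{m+1}-F_{\ell,n}^m=\sum_{\sigma'\in G_{\ell,n-1}^m}q^{\fmaf\sigma'+\ell(n-m-1)}x^{\fix\sigma'+1}=xq^{\ell(n-m-1)}F_{\ell,n-1}^m,
\end{equation*}
which is precisely \eqref{q-general}.

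The main obstacle is this combinatorial step. One must check that the deletion really lands in $G_{\ell,n-1}^m$ (the positions $1,\dots,n-m-1$ stay non-fixed, and the surviving fixed points all lie in $\{n-m,\dots,n-1\}$), and, above all, that $\Der$ is genuinely unchanged; this rests on the fact that the reduction $\rank$ in \eqref{der} never sees the value $n-m$, since as a fixed point it is absent from the derangement word. The bookkeeping of $\sum_j(i_j-j)$ under the downward shift of the remaining fixed points is the only place where the exact offset $n-m-1$, and hence the factor $q^{\ell(n-m-1)}$ in the recurrence, is pinned down.
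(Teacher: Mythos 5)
Your proof is correct, and its combinatorial half is essentially the paper's own argument: the paper also establishes \eqref{eq:eqcombi2} by showing that $f_{\ell,n}^{m}(q,x)=\sum_{\sigma\in G_{\ell,n}^m}q^{\fmaf\sigma}x^{\fix\sigma}$ satisfies \eqref{q-general}, peeling off the forced fixed point $n-m$ from each $\sigma\in G_{\ell,n}^{m+1}\setminus G_{\ell,n}^{m}$ and invoking Theorem~\ref{mainthm1} for the diagonal. The only cosmetic difference is that the paper encodes the bijection via the insertion algorithm, $\sigma=\langle\Der\sigma,i_1-1,i_2-2,\ldots,i_r-r\rangle\mapsto\sigma'=\langle\Der\sigma,i_2-2,\ldots,i_r-r\rangle$, which makes the invariance of $\Der$ and $\Neg$ and the exact offset $\ell(n-m-1)$ immediate, whereas you delete the fixed point directly and redo the bookkeeping by hand; your computation $\sum_{j=1}^{k}(i_j-j)-\sum_{j=1}^{k-1}(i'_j-j)=i_1-1=n-m-1$ is precisely what that encoding packages. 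Where you genuinely diverge is the algebraic half: you verify \eqref{explicitgeneral} by substituting it into \eqref{q-general} and reducing the interior coefficients to the $q$-Pascal rule ${N\brack k}_{q^\ell}={N-1\brack k}_{q^\ell}+q^{\ell(N-k)}{N-1\brack k-1}_{q^\ell}$, then appeal to uniqueness of the triangle. The paper instead proves a standalone lemma: any array with $a_{n,m}=z_m a_{n-1,m+1}+y_n a_{n-1,m}$ has the closed form $a_{n,m}=\sum_{k}x_{m+k}(z_m z_{m+1}\cdots z_{m+k-1})\,e_{n-k}(y_1,\ldots,y_n)$ in elementary symmetric polynomials, and then specializes $z_m=1$, $y_n=-xq^{\ell(n-1)}$ together with $e_k(1,q^\ell,\ldots,q^{\ell(n-1)})={n\brack k}_{q^\ell}q^{\ell\binom{k}{2}}$. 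Your route is shorter and self-contained but requires knowing the formula in advance (harmless here, since it is given in the statement); the paper's route is constructive, yields a more general reusable identity (a variant of a cited result of Zeng), and explains where the formula comes from. Both arguments rest on the same uniqueness principle, namely that \eqref{q-general} together with the diagonal values $g_{\ell,n}^{n}(q,x)=g_{\ell,n}(q,x)$ determines all $g_{\ell,n}^{m}(q,x)$.
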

\begin{remark}  The two statistics \emph{fmaf} and  \emph{fmaj}  are identical on  $\dln$, equidistributed on
$G_{\ell, n}$,  but not
equidistributed  on the set   $G_{\ell,n}^m$ for  $0 < m < n$.
\end{remark}

Let $0\leq m\leq n$ and $v$ be  a nonempty word of length $m$ on the alphabet $[n]$.
 Denote by
$\Sh(0^{n-m}v)$ the set of all
\emph{shuffles} of the words
$0^{n-m}$ and $v$, that is, the set of all words it is possible to construct
using $(n-m)$ 0's and  the  letters in $v$ by preserving the order of all the letters in $v$.
For any word  $w=x_1\ldots x_n$ in $\Sh(0^{n-m}v)$,
we call \emph{pillar}  any non zero  letter of
 $w$ and write
$\Pos(w):=v$ and $\Zero w:=\{i: 1\leq i\leq n,x_i=0\}$.
Let $\pil w$ be the length of $\Pos w$ and $\zero w$ be the cardinality of $\Zero w$.
Clearly $w$ is completely characterized by the pair $(\Zero w, \Pos w)$.
Given a  permutation $\sigma=\sigma(1)\sigma(2)\ldots \sigma(n+m)\in {S}_{n+m}$,
let $(j_1,j_2,\ldots, j_m)$ be the increasing sequence of the integers $k$ such that
$\sigma(k)\neq k$ for $1\leq k\leq n$. The word $w=x_1x_2\ldots x_{n+m}$ derived from $\sigma$ by replacing each fixed point
by 0 and each other letter $\sigma(j_k)$
by $\rank(\sigma(j_k))$ will be denoted by $\ZDer(\sigma)$.
For example, if
 $\sigma=\textbf{1}\,8\,\textbf{3}\,\textbf{4}\,6 2\,
\textbf{7}\,5 \,\textbf{9} \in S_9$  then
 $\ZDer(\sigma)=0\,4\,0\,0\,3 1\,0\,2 \,0$.
Let
$${\S}_n^{\Der}:=\bigcup\limits_{m,v}
\Sh(0^{n-m}v) \qquad (0\le m\le n,\, v\in \D_m).
$$
It is obvious (see \cite[Proposition 1.3]{fh08})  that
the map $\ZDer$ is a bijection from
${S}_n$ to ${\S}_n^{\Der}$. Recently 
Foata and Han  \cite{fh08} have constructed two transformations
$\mathbf \Phi$ and $\F:=\F_3$ on ${\S}_n^{\Der}$ (see Section~\ref{preuveegal}), apparently related to $\Psi$.
Our third object is to show  that  their  two bijections provide indeed a factorization of 
   Clarke et al.'s  bijection $\Psi$.
 \begin{theorem}\label{mainthm3}
We have
\begin{align}\label{eq:FHdecom}
\Psi=\ZDer^{-1}\circ {\bf F}\circ  {\bf\Phi}^{-1}\circ \ZDer.
\end{align}
In other words,
the diagram in
Figure~1 is commutative.
 \end{theorem}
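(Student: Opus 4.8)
The plan is to prove the identity \eqref{eq:FHdecom} by a direct comparison of the two bijections, both viewed as maps on $S_n$. Since $\ZDer\colon S_n\to{\S}_n^{\Der}$ is a bijection and $\mathbf\Phi,\mathbf F$ are bijections of ${\S}_n^{\Der}$ (recalled in Section~\ref{preuveegal}), the right-hand side $\ZDer^{-1}\circ\mathbf F\circ\mathbf\Phi^{-1}\circ\ZDer$ is automatically a well-defined bijection of $S_n$; hence it suffices to check that it agrees with $\Psi$ on every $\sigma\in S_n$. First I would write out, side by side, the explicit recursive descriptions of all three transformations $\Psi$, $\mathbf\Phi$ and $\mathbf F$, paying particular attention to how each one treats the fixed points of $\sigma$ (encoded as the zeros of $\ZDer(\sigma)$) as opposed to the letters of its derangement part $\Der(\sigma)$ (encoded, after $\rank$, as the nonzero pillars).

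The core argument I would run is an induction on $n$. The base cases $n\le 1$ are immediate, since both maps fix the unique element. For the inductive step the idea is to single out one distinguished reading position — typically the place of the largest letter, or the first/last letter processed by the insertion procedure defining $\Psi$ — and to show that stripping off this letter commutes with the whole composition. Concretely, I would verify that the way $\Psi$ transports that letter and rearranges the remaining word coincides, after passage through $\ZDer$, with the combined effect of first $\mathbf\Phi^{-1}$ and then $\mathbf F$ on the shuffle word $\ZDer(\sigma)$. Throughout, the zeros marking $\FIX(\sigma)$ must be tracked alongside the nonzero pillars, and the induction hypothesis is applied to the sub-permutation obtained by deletion.

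The step I expect to be the main obstacle is reconciling the two different coordinate systems in which the maps live: $\Psi$ acts directly on permutations of $[n]$, whereas $\mathbf\Phi$ and $\mathbf F$ act on the shuffle-word encoding in ${\S}_n^{\Der}$, where the fixed points have been replaced by zeros and the derangement part reduced via $\rank$. Foata and Han only \emph{noticed} that the composition shares certain distributional properties with $\Psi$ (agreement of the associated statistics such as $\maf$ versus $\maj$), and such properties alone do not pin $\Psi$ down uniquely; so the delicate point is that I cannot appeal to a soft uniqueness principle but must instead match the two algorithms step by step, checking letter by letter that each elementary move of $\Psi$ — the insertion together with the attendant shift of the zeros recording fixed points — is reproduced exactly by the corresponding elementary move of $\mathbf F\circ\mathbf\Phi^{-1}$ read through $\ZDer$. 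Once this local correspondence is established for the distinguished letter, the commutativity of the diagram in Figure~1 follows from the induction, yielding \eqref{eq:FHdecom}.
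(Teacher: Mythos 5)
Your framing is right on one point: since Foata and Han's maps are only known to share distributional properties with $\Psi$, no soft uniqueness argument is available, and the two algorithms must be matched exactly. But your proposal stops precisely where the real work begins, and the inductive scheme you choose is misaligned with the recursive structures of the three maps involved. An induction on $n$ that strips off ``the largest letter'' (or a first/last letter) of the permutation does not interact tractably with any of $\Psi$, ${\bf\Phi}^{-1}$ or $\F$: deleting a letter of $\tau$ changes $\Der(\tau)$ and hence the whole shuffle encoding (the ranks of all pillars move), whereas $\Psi$ is defined by a recursion that keeps the derangement part $\sigma$ fixed and inserts or removes \emph{fixed points} (the zeros of $\ZDer\tau$), $\F$ recurses on the last letter of the word, and ${\bf\Phi}^{-1}=\psi_n\psi_{n-1}\cdots\psi_1$ is a product of maps indexed by the zeros. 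The induction that actually fits all three structures is on the number $m$ of fixed points with $\sigma=\Der(\tau)$ held fixed, i.e., one shows that $\ZDer^{-1}\circ\F\circ{\bf\Phi}^{-1}\circ\ZDer$ satisfies the defining relations \eqref{m=1}--\eqref{eq:red2} that characterize $\Psi$; this is the route the paper takes.

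Moreover, even with the correct recursion unit, the statement to be proved is not that ``stripping off the letter commutes with the whole composition.'' In the green case one must show that appending one more zero at slot $i_m$ results, after $\F\circ{\bf\Phi}^{-1}$, in prepending the single entry $g_{i_m}$ to the vector of zero positions (relation \eqref{eq:green}), while in the red case the previously placed zeros are shifted by one and a new block $(g_{i_m}-i_m)^{[k-1]}$ appears (relations \eqref{eq:red1}--\eqref{eq:red2}); this is a genuinely nontrivial transformation, not a commutation. Establishing it requires (i) the green/red classification of slots with its explicit description (conditions $G_1$--$G_3$, $R_1$--$R_3$ in \eqref{eq:G}--\eqref{eq:R}, together with Lemma~\ref{t=di} computing the slot values $g_i$), (ii) a computation of how $\F$ acts on words of the form $w_10^rw_2$ (Lemma~\ref{F}), and (iii) factorization lemmas comparing ${\bf\Phi}^{-1}(\omega)$ with ${\bf\Phi}^{-1}(\Omega)$ when one zero is added, in both the green and red cases (Lemmas~\ref{fi} and \ref{phiR2-3}). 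None of this case analysis, nor any substitute for it, appears in your proposal; as written, the inductive step (``check letter by letter that each elementary move is reproduced'') is a restatement of the theorem rather than a proof of it.
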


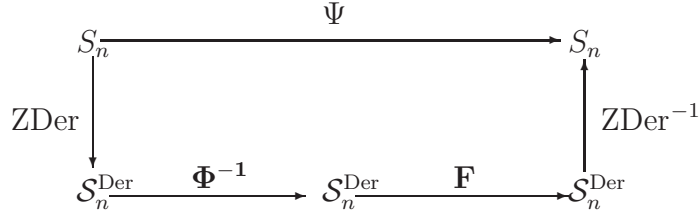
\begin{figure}
\setlength{\unitlength}{0.62pt}
\begin{picture}(520,120)
\put(100,7){$\S_n^{\Der}$}
\put(100,98){$S_n$}
\put(400,98){$S_n$}
\put(250,7){$\S_n^{\Der}$}
\put(400,7){$\S_n^{\Der}$}
\put(115, 105){\vector(1,0){280}}
\put(110,95){\vector(0,-1){68}}
\put(410,25){\vector(0,1){68}}
\put(120,10){\vector(1,0){120}}
\put(270,10){\vector(1,0){130}}
\put(60,50){$ \ZDer$}
\put(420,50){$\ZDer^{-1}$}
\put(330,15){$\F$}
\put(170,15){$\bf \Phi^{-1}$}
\put(250,115){$\Psi$}
\end{picture}
\medskip
\caption {The Foata-Han factorization of  Clarke et al.'s  bijection $\Psi$}
\end{figure}

\medskip

The rest of this paper is organized as follows.
In Section~\ref{proof of th mainthm1}, after  recalling  Clarke et al.'s  bijection
$\Psi$,  we prove Theorem~\ref{mainthm1}.
The proof of  Theorem~\ref{mainthm2} is given 
 in Section~\ref{forme explicite et interpretation combinatoir}. 
 In Section~\ref{preuveegal},  after recalling Foata and Han's  two transformations $\mathbf \Phi$ and $\F$,
we prove  Theorem~\ref{mainthm3} and postpone  the proof of  three technical  lemmas 
to  Section~\ref{The proof of three lemmas}.

\section{Proof of Theorem~\ref{mainthm1}}\label{proof of th mainthm1}
We first recall Clarke et al.'s bijection $\Psi$, which will also be used in Section~\ref{preuveegal},  and then 
show that one can
extend  $\Psi$ to  $G_{\ell,n}$  with the following property:
\begin{align}\label{eq:mainthm1}
(\maf,\exc,\fix, \Neg)\Psi(\sigma)=(\maj,\exc,\fix, \Neg) \sigma,\qquad \forall \sigma\in \gln.
\end{align}

\subsection{Clarke et al.'s bijection $\Psi$}\label{Psi}\
Let $\sigma=x_1x_2\ldots x_n\in S_n$ with
 $x_0=x_{n+1}=+\infty$.
 For $0\leq i\leq n$, a pair $(i,i+1)$ of positions
 is the $j$-th {\em slot} of $\sigma$ \emph{provided that} $x_i\ne i$ and that $\sigma$ has $i-j$ fixed points
 $f$ such that $f<i$. Of course,  the $j$-th slot  is $(j,j+1)$ if $\sigma$ is a derangement.
 Clearly we can insert a fixed point into the $j$-th
 slot and  obtain the permutation
\begin{equation}\label{jslot}
\langle \sigma,j\rangle=x_1'x_2'\ldots x_i'\ (i+1)\ x_{i+1}'\ldots x_n',
\end{equation}
where $x'=x$ if $x\le i$ and $x'=x+1$ if $x>i$ for all $ x \in [n]$.

Now,   if $\sigma$ is a derangement in $S_n$ and $(i_1,
i_2,\ldots, i_m)$ a sequence of integers such that $0\le i_1\le
i_2\le\cdots\le i_m\le n$,
we can insert successively $m$ fixed points in $\sigma$ and obtain   a permutation $\tau$ in $S_{n+m}$:
\begin{align}\label{eq:writing}
\tau=\langle\sigma,i_1,\ldots,i_m\rangle=\langle\langle\sigma,i_1,\ldots,i_{m-1}\rangle,i_m\rangle.
\end{align}
Note that  $\Der(\tau)=\sigma$ and
the fixed points of the last permutation are
$i_1+1,i_2+2,\ldots,i_m+m$.
Conversely, any permutation $\tau\in S_{m+n}$ with   $m$ fixed points
can be written  as  \eqref{eq:writing} in one and only one way.
Thus,  if  $S(\sigma,m)$ denotes the set of permutations in $S_{n+m}$ with derangement part $\sigma\in
\D_n$, then
$$
S(\sigma,m)=\{\langle\sigma, i_1,\ldots, i_m\rangle|\; 0\leq i_1\leq i_2\leq \cdots \leq i_m\leq n \}.
$$
Let $\sigma\in S_n$.  The $j$-th slot $(i,\,
i+1)$ of $\sigma$ is said to be  \emph{green} if
$\dess\langle\sigma,j\rangle=\dess\sigma$, \emph{red} if
 $\dess\langle\sigma,j\rangle=\dess\sigma+1$.
We assign \emph{values} from 0 to $g$ to the green slots  of  $\sigma$ from right to
 left, and values  from $g+1$ to
 $n$ to the  red slots from left to right. Denote the value of the $j$-th slot by
$g_j$.
The  bijection $\Psi: S(\sigma,m)\longrightarrow S(\sigma,m)$ is defined by induction on $m\geq 0$ as follows:
\begin{enumerate}
\item $\Psi$ is the identity mapping on $S(\sigma,0)$.
\item
Define $\Psi$ on $S(\sigma, 1)$ by
\begin{equation}\label{m=1}
\Psi\langle\sigma,i\rangle=\langle\sigma,g_i\rangle.
\end{equation}
\item Let $m>1$ and suppose that
 $\Psi$ has been defined on $S(\sigma, k)$ for $0\leq k\leq m-1$.
Consider $\tau=\langle\sigma,i_1,\ldots,i_m\rangle$.
Suppose that the $i_m$-th slot of $\sigma$
is green.  Then, if
$\Psi\langle\sigma,i_1,\ldots,i_{m-1}\rangle=\langle\sigma,j_2,\ldots,j_m\rangle$,
we define
\begin{align}\label{eq:green}
\Psi(\tau)=\langle\sigma,g_{i_m},j_2,\ldots,j_m\rangle.
\end{align}
Suppose that the $i_m$-th slot of $\sigma$ is red.
Let $k$ be the smallest positive integer such
that $i_{m-k}<i_m$. Thus $i_{m-k}<i_{m-k+1}=\cdots=i_m$. Then,
\begin{itemize}
  \item if $k=m$ we define
  \begin{align}\label{eq:red1}
\Psi(\tau)=\langle\sigma,
\underbrace{g_{i_m}-i_m,\ldots,g_{i_m}-i_m}_{m-1\mbox{ terms}},g_{i_m}\rangle.
\end{align}
 \item if $k<m$ and $\Psi\langle\sigma,i_1,\ldots,i_{m-k}\rangle=\langle\sigma,j_1,\ldots,j_{m-k}\rangle$,
we define
\begin{align}\label{eq:red2}
\Psi(\tau)=\langle\sigma,
\underbrace{g_{i_m}-i_m,\ldots,g_{i_m}-i_m}_{k-1\mbox{ terms}},
j_1+1,\ldots,j_{m-k}+1,g_{i_m}\rangle.
\end{align}
\end{itemize}
\end{enumerate}

\subsection{Generalization of $\Psi$ to $G_{\ell,n}$}
To extend  the insertion algorithm  \eqref{eq:writing} to colored permutations we just  need to modify  equation \eqref{jslot} as follows:
    \begin{equation}\label{jslotG}
\langle \sigma,j\rangle=x_1'x_2'\ldots x_i'\ (i+1)\ x_{i+1}'\ldots x_n',
\end{equation}
where $x'=x$ if $|x|\le i$ and
\begin{equation}\label{somme}
x'= x+1:=\varepsilon_x(|x|+1),
\end{equation}
 if $|x|>i$ for all $ x \in \sln$.
Thus each colored permutation $\tau\in \gln$ can be written as \eqref{eq:writing}.

\medskip
\example Let $\tau=1\,\,\bar 9\,\,3\,\,10\,\,5\,\,6\,\,7\,\,4\,\,2\,\,\bar{\bar8}\in G_{3,10}$.  Then $\sigma:=\Der(\tau)=\bar{4}\,5\,2\,1\,\bar{\bar{3}}\in  {\mathcal D}_{3,5}$ and
$\tau=\langle \sigma, 0,1,2,2,2\rangle$. Indeed,
\begin{align*}
\langle \sigma, 0\rangle&=1\,\,\bar 5\,\,6\,\,3\,\,2\,\,\bar{\bar 4},\\
\langle \sigma, 0,1\rangle&=1\,\,\bar 6\,\,3\,\,7\,\,4\,\,2\,\,\bar{\bar 5},\\
\langle \sigma, 0,1,2\rangle&=1\,\,\bar 7\,\,3\,\,8\,\,5\,\,4\,\,2\,\,\bar{\bar 6},\\
\langle \sigma, 0,1,2,2\rangle&=1\,\,\bar 8\,\,3\,\,9\,\,5\,\,6\,\,4\,\,2\,\,\bar{\bar7},\\
\langle \sigma, 0,1,2,2,2\rangle&=1\,\,\bar 9\,\,3\,\,10\,\,5\,\,6\,\,7\,\,4\,\,2\,\,\bar{\bar8}.
\end{align*}
So
$\tau=\langle \sigma, 0,1,2,2,2\rangle$.

\medskip
As in the symmetric group we say that the $i$-th
slot ($0\leq i\leq n$)  of $\sigma\in \gln$ is green if $\desl\langle \sigma,i\rangle=\desl\sigma$ and red
 if $\desl\langle \sigma,i\rangle=\desl\sigma+1$.
 In the same way, we can assign  a value to each
 slot of $\sigma$. Now, let $\sigma$ be a derangement
and  $G(\sigma, m):=\{\tau\in G_{\ell,n+m}\,|\,\Der(\tau)=\sigma\}$.
We can  extend  $\Psi$ to a bijection on $G(\sigma, m)$.
It follows  that
$\Der(\Psi(\tau))=\Der(\tau)$ for any $\tau\in G(\sigma, m)$. Since
$\exc\tau=\exc(\Der(\tau))$ and $\Neg\tau=\Neg(\Der(\tau))$,  we have immediately
$$
(\exc,\fix, \Neg)\Psi(\tau)=(\exc,\fix, \Neg) \tau.
$$
It remains to verify $\maj \tau=\maf(\Psi(\tau))$.
On symmetric groups, the proof of the latter equality is based on the following result.
\begin{lem}[Clarke et al.]\label{lem clark}
Let $\sigma$ be a derangement in symmetric group and  $g_i$ the value of its $i$-th  slot then
\begin{equation}\label{maj-maj}
\maj\langle \sigma,i\rangle=\maj\sigma+g_i.
\end{equation}
\end{lem}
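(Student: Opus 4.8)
The plan is to compute, for each slot $i$, the exact change $m_i:=\maj\langle\sigma,i\rangle-\maj\sigma$ produced by inserting a fixed point, and then to verify that this change is precisely the value $g_i$ prescribed by the green/red colouring. The first step is to exploit that the relabelling $x\mapsto x'$ in \eqref{jslot} is order-preserving: it fixes the relative order of all the original letters of $\sigma$ and merely shifts by one the positions lying to the right of the insertion. Hence every descent of $\sigma$ survives in $\langle\sigma,i\rangle$—except possibly the one straddling the positions $(i,i+1)$—and those lying to the right of position $i$ have their position increased by $1$. This already splits $m_i$ into a global part, namely the number $r(i)$ of descents of $\sigma$ in positions $>i$, and a local part coming from the two new adjacencies created around the inserted letter $i+1$.

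Next I would pin down the local part. A direct check shows that $\langle\sigma,i\rangle$ has a descent at position $i$ exactly when $x_i>i$, and a descent at position $i+1$ exactly when $x_{i+1}\le i$, while the old descent at position $i$ is destroyed exactly when $x_i>x_{i+1}$. This yields simultaneously the descent count
\[
\des\langle\sigma,i\rangle-\des\sigma=\chi(x_i>i)+\chi(x_{i+1}\le i)-\chi(x_i>x_{i+1}),
\]
which classifies slot $i$ as green (difference $0$) or red (difference $1$), and the explicit value
\[
m_i=i\,\chi(x_i>i)+(i+1)\,\chi(x_{i+1}\le i)-i\,\chi(x_i>x_{i+1})+r(i).
\]
The two boundary slots are handled separately: slots $0$ and $n$ are always green, with $m_0=\des\sigma$ and $m_n=0$. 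Feeding the possible patterns of the three indicator functions into these formulas leaves only a short finite list of cases for $(\,\des\text{-change},\,m_i)$ in each colour.

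It then remains to match $m_i$ with $g_i$. By the colouring rule one has $g_i=\#\{\text{green slots to the right of }i\}$ when $i$ is green, and $g_i=(\#\text{green slots})+\#\{\text{red slots to the left of }i\}$ when $i$ is red. I would therefore prove the two equivalent structural facts: the green values of $m$ are exactly $\{0,1,\dots,g\}$ and increase as the green slots are read from right to left, while the red values of $m$ are exactly $\{g+1,\dots,n\}$ and increase as the red slots are read from left to right. Together these force $m_i=g_i$ for every slot.

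The heart of the argument—and the main obstacle—is this monotone consecutiveness, because the term $r(i)$ is only weakly monotone in $i$ and the $\pm1$ corrections in the formula for $m_i$ depend on the genuine comparison $x_i>x_{i+1}$, not merely on the excedance pattern, so monotonicity cannot be read off term by term. I would establish it by an induction anchored at the rightmost slot $n$ (green, with $m_n=0$), comparing each slot with the nearest slot of the same colour: I expect to show that passing from a green slot to the next green slot on its right decreases $m$ by exactly $1$, and passing from a red slot to the next red slot on its right increases $m$ by exactly $1$, the increment of $1$ emerging from a cancellation between the descents of $\sigma$ swept over in that range and the positional contributions of the intervening opposite-coloured slots. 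A convenient auxiliary identity along the way is that the number of green slots equals $\des\sigma+1$, which both fixes $g$ and separates the green range from the red range, so that the telescoped green values $\{0,\dots,g\}$ and the red values are forced to fill $\{g+1,\dots,n\}$.
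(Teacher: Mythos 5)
Your first two steps are correct and rigorous, and in fact they recover exactly the ingredients the paper records around this lemma: the descent-change formula classifying slots as green/red is precisely the characterization \eqref{eq:G}--\eqref{eq:R}, and your formula for $m_i:=\maj\langle\sigma,i\rangle-\maj\sigma$ condenses to $m_i=r(i)+\chi(x_{i+1}\le i)$ for a green slot and $m_i=i+r(i)+\chi(x_{i+1}\le i)$ for a red slot, where $r(i)$ is the number of descents of $\sigma$ in positions $>i$ (this is the content of Lemma~\ref{t=di}). Note, however, that the paper itself does not prove the lemma: it quotes it from Clarke et al.~\cite{chz97} and only verifies that the relabelling $x\mapsto x'$ of \eqref{jslotG} is order-preserving, so that the proof of \cite{chz97} transfers to colored derangements. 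So your attempt must stand on its own, and its final step does not.

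The gap is the anchoring of the red chain. Even granting your two telescoping claims (they are true, but only announced --- ``I expect to show\dots''), what you obtain is: green values read right to left are $0,1,\ldots,g$, anchored at $m_n=0$; red values read left to right form a run of consecutive integers $v,v+1,\ldots,v+(n-g-1)$ with \emph{undetermined} starting value $v$. Your auxiliary identity (number of green slots $=\des\sigma+1$) fixes $g$ and hence the \emph{length} of the red run, but not where it starts; nothing you establish excludes, say, $v\le g$, i.e.\ overlap with the green values, so the red values are not ``forced'' to fill $\{g+1,\ldots,n\}$. You need one cross-colour anchor --- e.g.\ that the rightmost red slot has $m$-value $n$, or the leftmost one $g+1$ --- and proving that costs the same kind of analysis as the telescoping itself. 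A cleaner way to finish, which removes both the unproved telescoping and the anchoring problem at once, is to prove by downward induction on $i$ the single identity
\begin{equation*}
\#\{\text{green slots }j>i\}\;=\;\#\{\text{descents of }\sigma\text{ in positions }>i\}+\chi(x_{i+1}\le i)\qquad(0\le i\le n),
\end{equation*}
whose induction step reduces, via the green/red criterion, to $\chi(x_j\le j)=\chi(x_j<j)$, valid precisely because $\sigma$ is a derangement. Substituting this identity into the two compact formulas above gives $m_i=\#\{\text{green }j>i\}$ for green slots and $m_i=i+\#\{\text{green }j>i\}=(g+1)+\#\{\text{red }j<i\}$ for red slots, which are exactly the assigned values $g_i$, for both colours simultaneously.
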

Now, the substitution $x\rightarrow x'$ in  \eqref{jslotG} is compatible with
 the linear order (see \eqref{ordre}) on the alphabet $\sln$, namely
\[   \forall a,b \in \sln\quad a<b \Longleftrightarrow a'<b',\]
hence
 $\DES(x_1'x_2'\ldots x_i')=\DES(x_1x_2\ldots x_i)$ and $\DES( x_{i+1}'\ldots x_n')=\DES( x_{i+1}\ldots x_n)$.
So  the  proof of  Lemma~\ref{lem clark} in  \cite{chz97} remains valid when we replace a derangement $\sigma$
 by a any colored derangement.  Then we derive  \eqref{eq:mainthm1} as in \cite{chz97}.

\medskip
 \example  Let $\tau= 1\,\,\bar 9\,\,3\,\,10\,\,5\,\,6\,\,7\,\,4\,\,2\,\,\bar{\bar8}\in G_{3,10}$.
 We have $\DES\tau=\{1,4,7,8,9\}$ and $\maj\tau=29$, $\Der(\tau)=\sigma=  \overline{4} \,5\,2\,\, 1 \,\overline{\overline{3}}$; $\maj\sigma=9$.
 By the previous example, we can write $\tau=\langle  \sigma \,,0,1,2,2,2   \rangle$. Note that slots 1,3,4,5 are {\em green}, while slots 0 and 2 are {\em red}.
 So the sequence of values of the slots is  $(g_0,g_1,g_2,g_3,g_4,g_5)=(4,3,5,2,1,0)$.
 The algorithm $\Psi$ goes as follows:
 \begin{itemize}
\item $\Psi\langle \sigma,0 \rangle=\langle \sigma$,  $g_0 \rangle=\langle \sigma,4 \rangle$;
\item Since 1 is green, $\Psi\langle \sigma, 0, 1 \rangle=\langle \sigma, g_1,4\rangle=\langle \sigma, 3,4\rangle$;
\item Since  2 is red,  $\Psi\langle \sigma, 0, 1,2,2,2 \rangle=\langle \sigma, g_2-2,g_2-2,3+1,4+1,g_2\rangle=\langle \sigma, 3,3,4,5,5\rangle$.
\end{itemize}
Thus $\Psi(\tau)={\bar6}\,\,8\,\,2\,\,4\,\,5\,\,1\,\,7\,\,\bar{\bar3}\,\,9\,\,10$. Note that
\begin{align*}
 \maf\Psi(\tau)&=\maj\sigma+3+3+4+5+5=29=\maj(\tau),\\
 \fmaf\Psi(\tau)&=\fmaj(\tau)=3\times 29+3=90.
 \end{align*}
\subsection{Proof of   \eqref{eq:xhaglund}}
Recall  Cauchy's $q$-binomial formula  (cf. \cite[p.17]{andrews76}):
$$
\sum_{n\geq 0}\frac{(a;q)_n}{(q;q)_n}u^n=\frac{(au;q)_\infty}{(u;q)_\infty}.
$$
In particular we have  Euler's formula by taking $a=0$:
\begin{align}\label{eq:euler}
e_q(u):=\sum_{n\geqslant 0}\frac{u^n}{(q;q)_n}=\frac{1}{(u;q)_\infty}.
\end{align}
Let $f_{\ell,n}(q,x)=\sum_{\sigma\in \gln}x^{\fix\sigma}q^{\fmaf\sigma}$.
By the insertion algorithm,
we can write any permutation $\sigma\in G_{\ell,n}$  with $n-k$ ($0\leq k\leq n$) fixed points as
$\sigma=\langle \pi, i_1, \ldots, i_{n-k}\rangle$, where $\pi\in \D_{\ell, k}$ and $0\leq i_1\leq \cdots \leq i_{n-k}\leq k$.  Since
$\fmaf \sigma=\fmaj\pi+\ell(i_1+\cdots +i_{n-k})$, we have 
\begin{align*}
f_{\ell,n}(q,x)&=\sum_{k=0}^nx^{n-k}\sum_{\pi\in \dlk}\sum_{\sigma\in \gln\atop \Der(\sigma)=\pi}q^{\fmaf\sigma}\\
&=\sum_{k=0}^nx^{n-k}\sum_{\pi\in \dlk}q^{\fmaj\pi}
\sum_{0\leq i_1\leq\cdots\leq i_{n-k}\leq k}q^{\ell(i_1+\cdots +i_{n-k})}\\
&= \sum_{k=0}^nx^{n-k}{n\brack k}_{q^{\ell}}d_{\ell,k}(q).
\end{align*}
Therefore
\begin{align}\label{eq:keyid}
 \sum_{n\geqslant 0}f_{\ell,n}(q,x)\frac{u^n}{(q^\ell;q^\ell)_n}=
 e_{q^\ell} (xu)\sum_{k=0}^\infty d_{\ell,k}(q)\frac{u^k}{(q^\ell;q^\ell)_k}.
\end{align}
Since 
 $f_{\ell,n}(q,1)=(1-q)^{-n}(q^\ell;q^\ell)_n$ by \eqref{haglund} and \eqref{eq:fix-maf},  setting $x=1$ in \eqref{eq:keyid} yields then
\begin{align*}
 \sum_{n\geq 0}d_{\ell,n}(q)\frac{u^n}{(q^\ell;q^\ell)_n}=\frac{1}{1-u/(1-q)}\frac{1}{e_{q^\ell} (u)}.
 \end{align*}
Substituting this back  to \eqref{eq:keyid} we obtain
\begin{align}\label{eq:f}
\sum_{n\geqslant 0}f_{\ell,n}(q,x)\frac{u^n}{(q^\ell;q^\ell)_n}=\frac{1}{1-u/(1-q)}\frac{e_{q^\ell}(xu)}{e_{q^\ell} (u)}.
\end{align}
On the other hand, by \eqref{eq:xhaglund} we have
\begin{align*}
\sum_{n\geq 0}g_{\ell,n}(q,x) \frac{u^n}{(q^\ell,q^\ell)_n}&=
 \sum_{n\geq 0}\sum_{i\geq 0}
 \frac{(x-1)(x-q^{\ell})\cdots (x-q^{\ell(i-1)})}
 {(q^\ell;q^\ell)_i}\frac{u^n}{(1-q)^{n-i}}\\
&=\sum_{i\geq 0}\frac{(x^{-1};q^\ell)_i}{(q^\ell;q^\ell)_i}(xu)^i \sum_{n\geq 0}\left(\frac{u}{1-q}\right)^n\\
&=\frac{1}{1-u/(1-q)}\frac{(u;q^\ell)_\infty}{(xu;q^\ell)_\infty},
\end{align*}
which is equal to the right-hand side of
\eqref{eq:f}  by Euler's formula \eqref{eq:euler}. It follows that $f_{\ell,n}(q,x)=g_{\ell,n}(q,x)$.

\begin{remark}
As $(1-u)e_q(u)=e_q(qu)$, we can also write \eqref{eq:f} as
$$
 \sum_{n\geqslant 0}f_{\ell,n}(q,x)\frac{u^n}{(q^\ell;q^\ell)_n}=
\frac{(1-q)e_{q^\ell}(xu)}{e_{q^\ell}(q^\ell u)-qe_{q^\ell}(u)}.
$$
\end{remark}

\section{Proof of Theorem~\ref{mainthm2}}\label{forme explicite et interpretation combinatoir}
To derive an explicit formula for $g_{\ell,n}^m(q)$ we give
a more general formula, which is a variant of a result in \cite[Th.
 3]{z06} and may be also interesting in its own right.
\begin{lem}
Let $(a_{n,m})_{0\leq m\leq n}$ be an array defined by
\begin{align}\label{eq:array}
\left \lbrace
\begin{array}{ll}
a_{0,m}=x_m\qquad \qquad (m=n);\\
a_{n,m}=z_ma_{n-1,m+1}+y_na_{n-1,m}\qquad (0\leq m\leq n-1).
\end{array}
\right.
\end{align}
If $e_i(y_1,y_2,\ldots,y_n)$  denotes the
$i$-th elementary symmetric polynomial of
$y_1,\ldots,y_n$, then
\begin{align}\label{mainresul}
a_{n,m}=\sum_{k=0}^nx_{m+k}(z_mz_{m+1}\ldots z_{m+k-1})
e_{n-k}(y_1,y_2,\ldots, y_n).
\end{align}
\end{lem}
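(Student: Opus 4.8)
The plan is to prove \eqref{mainresul} by induction on $n$, by checking that its right-hand side satisfies both the initial condition and the recurrence in \eqref{eq:array}; since these data determine the array uniquely, this forces equality with $a_{n,m}$. Write $A_{n,m}$ for the right-hand side of \eqref{mainresul}. For the base case $n=0$ the sum collapses to its single $k=0$ term: the product $z_m\cdots z_{m-1}$ is empty and $e_0$ equals $1$, so $A_{0,m}=x_m$, which is exactly the prescribed initial value. It then remains to verify that $A_{n,m}=z_m A_{n-1,m+1}+y_n A_{n-1,m}$ for $n\ge 1$, and to invoke the inductive hypothesis $A_{n-1,m'}=a_{n-1,m'}$.

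The only tool needed for the inductive step is the Pascal-type recurrence for elementary symmetric polynomials,
\[
e_{n-k}(y_1,\ldots,y_n)=e_{n-k}(y_1,\ldots,y_{n-1})+y_n\,e_{n-k-1}(y_1,\ldots,y_{n-1}).
\]
Substituting this into the definition of $A_{n,m}$ splits the sum into two pieces. In the piece carrying $e_{n-k}(y_1,\ldots,y_{n-1})$, the term $k=0$ drops out because $e_n(y_1,\ldots,y_{n-1})=0$; after reindexing $k\mapsto k-1$ and using $z_m\cdot(z_{m+1}\cdots z_{m+k-1})=z_m z_{m+1}\cdots z_{m+k-1}$, this piece is recognized as $z_m A_{n-1,m+1}$. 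In the piece carrying $y_n\,e_{n-k-1}(y_1,\ldots,y_{n-1})$, the $z$-product $z_m\cdots z_{m+k-1}$ is left untouched and the top term $k=n$ drops out because $e_{-1}=0$, so this piece is exactly $y_n A_{n-1,m}$. Adding the two pieces yields the desired recurrence, completing the induction.

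The heart of the matter, and the one place where care is required, is the interplay between the shift of the product index in $z_m z_{m+1}\cdots z_{m+k-1}$ and the index shift produced by the $e$-recurrence: prepending the single factor $z_m$ to a product based at $m+1$ must exactly reconstruct the product based at $m$, which is what lets the first piece be read off as $z_m A_{n-1,m+1}$ rather than as a shifted, unrecognizable sum. The boundary vanishings $e_n(y_1,\ldots,y_{n-1})=0$ and $e_{-1}=0$ are precisely what make the two summation ranges match, and bookkeeping these endpoints is the main (though modest) obstacle; everything else is routine. I would also remark that the same computation goes through verbatim if the array is regarded as defined for all $m\ge 0$ with initial row $a_{0,m}=x_m$, which is the form actually used when specializing the $x_j,y_j,z_j$ to derive \eqref{explicitgeneral}.
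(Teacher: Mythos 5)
Your proof is correct and follows essentially the same route as the paper's: induction on $n$ whose inductive step rests on the Pascal-type recurrence $e_j(y_1,\ldots,y_n)=e_j(y_1,\ldots,y_{n-1})+y_n\,e_{j-1}(y_1,\ldots,y_{n-1})$ together with the boundary vanishings $e_n(y_1,\ldots,y_{n-1})=0$ and $e_{-1}=0$. The only difference is cosmetic: you verify that the closed formula satisfies the recurrence and invoke uniqueness, whereas the paper expands $a_{n,m}=y_na_{n-1,m}+z_ma_{n-1,m+1}$ via the inductive hypothesis and recollects terms--the same computation run in the opposite direction.
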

\begin{proof} The formula is obviously true for $n=0$ and $n=1$.
Suppose that it is true until $n-1$.
Since
$(1+y_1t)(1+y_2t)\cdots (1+y_nt)=\sum_{i=0}^ne_i(y_1,\ldots, y_n)t^i$,
we then have
\begin{align*}
a_{n,m}&=y_na_{n-1,m}+z_ma_{n-1,m+1}\\
&=y_n\sum_{k=0}^{n-1}(z_mz_{m+1}\ldots z_{m+k-1})
e_{n-1-k}(y_1,y_2,\ldots, y_{n-1})x_{m+k}\\
&\hspace{1cm}+z_m
\sum_{k=0}^{n-1}(z_{m+1}z_{m+2}\ldots z_{m+k})e_{n-1-k}(y_1,y_2,\ldots,
 y_{n-1})x_{m+k+1}\\
&=y_ne_{n-1}(y_1,y_2,\ldots, y_{n-1})x_m\\
&+\sum_{k=0}^{n-1}\left(\prod_{j=0}^{k}z_{m+j}\right)
\left(y_ne_{n-2-k}(y_1,y_2,\ldots, y_{n-1})
+e_{n-1-k}(y_1,y_2,\ldots, y_{n-1})\right)x_{m+1+k}\\
&=e_{n}(y_1,y_2,\ldots, y_{n})x_m+
\sum_{k=1}^{n}\left(\prod_{j=0}^{k-1}z_{m+j}\right)
e_{n-k}(y_1,y_2,\ldots, y_{n})x_{m+k}.
\end{align*}
This completes the proof.
\end{proof}

Now, specializing the array \eqref{eq:array} with
  $x_m=g_{\ell,m}^m(q,x)$, $z_m=1$  and $y_n=-xq^{\ell (n-1)}$, then
the  $q$-binomial formula
$(1+t)(1+qt)\cdots (1+q^{n-1}t)
=\sum_{k=0}^n {n\brack k}_q
q^{k\choose 2}t^k$
implies that
\begin{align*}
e_k(1,q,q^2,\ldots, q^{n-1})={n\brack k}_q
q^{k\choose 2}\qquad (0\leq k\leq n).
\end{align*}
Applying \eqref{mainresul} we get
\begin{align*}
a_{n,m}=g_{\ell,n+m}^{m}(q,x)&=\sum_{k=0}^n(-x)^{n-k}{n\brack
 k}_{q^\ell}q^{\ell{n-k\choose 2}}
g_{\ell,m+k}^{m+k}(q,x).
\end{align*}
Shifting $n$ by $n-m$ and then  replacing $k$ by $n-m-k$ yields \eqref{explicitgeneral}.
\medskip

Let $f_{\ell,n}^{m}(q,x):= \sum q^{\fmaf\sigma} x^{\fix\sigma}$ ($\sigma \in G_{\ell,n}^m$)  be the right-hand side of  \eqref{eq:eqcombi2}.
Then $f_{\ell,n}^{n}(q,x)=g_{\ell,n}^{n}(q,x)$ by Theorem~\ref{mainthm1}.
For each fixed $n$
we will show by induction on $m$ ($0\leq m\leq n$) that
  $\{f_{\ell,n}^{m}(q,x)\}$  satisfies the recurrence relation \eqref{q-recurrence}.
For  $0\leq m\leq n-1$ define
 $$
E:=\gln^{m+1}\setminus \gln^{m}=\{ \sigma \in\gln^m : \sigma(n-m)=n-m\}.
 $$
 By \eqref{eq:writing}, each permutation $\sigma\in E$ can be written as
  $$
 \sigma=\langle\Der \sigma, i_1-1,i_2-2,\ldots i_r-r\rangle,
 $$
 where $i_1, \ldots, i_{r}$ are the  fixed points of  $\sigma$ arranged
in increasing order ($i_1=n-m$).
 Let  $\sigma'=\langle\Der \sigma, i_2-2,\ldots i_r-r\rangle$. Then the mapping
$\sigma\mapsto \sigma'$ is a bijection
from  $E$ to  $G_{\ell,n-1}^m $ such that $\fix \sigma=\fix\sigma'+1$ and
$ \fmaf \sigma=\ell((i_1-1)+(i_2-2)+\cdots+(i_r-r))+\fmaj\Der\sigma$.
It follows that   $\fmaf\sigma=\fmaf\sigma'+\ell(n-m-1)$.  Hence
\begin{align*}
 f_{\ell,n}^{m+1}(q,x)
 &=\sum_{\sigma \in G_{\ell,n}^m}q^{\fmaf\sigma}x^{\fix\sigma}+\sum_{\sigma' \in
 G_{\ell,n-1}^m}q^{\fmaf\sigma'+\ell(n-m-1)}x^{\fix\sigma}\\
 &=f_{\ell,n}^{m}(q,x)+xq^{\ell(n-m-1)}f_{\ell,n-1}^{m}(q,x).
\end{align*}
This completes the proof of \eqref{eq:eqcombi2}.

\section {Proof of Theorem \ref{mainthm3}}\label{preuveegal}
\subsection{Foata-Han's first  transformation $\ph$}
Let $v$ be a  derangement of order~$m$ and
$w=x_1x_2\cdots x_n \in \Sh(0^{n-m}v)$ $(0\le n\le m)$. Thus
$v=x_{j_1}x_{j_2}\cdots x_{j_m}$, where
$1\le j_1<j_2<\cdots <j_m\le n$.  Recall that ``rank" is the
increasing bijection of $\{j_1,j_2,\ldots,j_m\}$ onto the interval
$[\,m\,]$. A positive letter~$x_k$ of~$w$ is said to
be excedent (resp. subexcedent) if 
$x_k>\rank(k)$ (resp. $x_k<\rank(k)$). Accordingly, a letter is
non-subexcedent if it is either equal to~0 or excedent.

We define $n$ bijections~$\phi_l$ ($1\leq l\leq n$)
from $\Sh(0^{n-m}v)$ ($0\leq n\leq m$) onto itself in the following manner:
for each~$l$ such that $n-m+1\le l\le n$ let
$\phi_l(w):=w$. When $1\le l\le n-m$, let $x_j$ denote the $l$-th
letter of~$w$, equal to~0, when~$w$ is read {\it from left to right}.
Three cases are next considered (by convention,
$x_0=x_{n+1}=+\infty$):
\begin{itemize}
\item[(1)] $x_{j-1}$, $x_{j+1}$ both non-subexcedent;
\item[(2)] $x_{j-1}$ non-subexcedent, $x_{j+1}$ subexcedent; or
$x_{j-1}$, $x_{j+1}$ both subexcedent with $x_{j-1}>x_{j+1}$;
\item[(3)] $x_{j-1}$ subexcedent, $x_{j+1}$ non-subexcedent; or
$x_{j-1}$, $x_{j+1}$ both subexcedent with $x_{j-1}<x_{j+1}$.
\end{itemize}
When case (1) holds, let $\phi_l(w):=w$.
When case (2) holds, determine the {\it greatest} integer
$k\ge j+1$ such that
$x_{j+1}<x_{j+2}<\cdots <x_k< \rank(k)$
so that
$w=x_1\cdots x_{j-1}\;0\; x_{j+1}\cdots
x_k\;x_{k+1}\cdots x_n$
and define:$
\phi_l(w):=x_1\cdots x_{j-1}\;
x_{j+1}\cdots x_k\;0\;x_{k+1}\cdots x_n$.
When case (3) holds, determine the {\it smallest}
integer $i\le j-1$ such that
$\rank(i)>x_i>x_{i+1}>\cdots >x_{j-1}$ so that
$w=x_1\cdots x_{i-1}\ x_i\cdots x_{j-1}\;0\;
x_{j+1}\cdots x_n$
and define
$\phi_l(w):=x_1\cdots x_{i-1}\; 0\;
x_{i}\cdots x_{j-1}\;x_{j+1}\cdots x_n.$

It is important to note that $\phi_l$ has no action on the 0's
other than the $l$-th one. The mapping ${\bf\Phi}$  is defined to be the composition product
${\bf\Phi}:=\phi_1\phi_2\cdots \phi_{n-1}\phi_n$.
To verify that $\Phi$ is bijective, Foata and Han introduce a class of bijections
$\psi_l$, whose definitions are parallel to the definitions of the $\phi_l's$.
Let
$w=x_1x_2\cdots x_n\in \Sh(0^{n-m}v)$ $(0\le n\le m)$.
We define $n$ bijections $\psi_l$
$(1\leq l\leq n)$ of~$\Sh(0^{n-m}v)$ onto itself in the following
manner: For each $l$ such that $n - m + 1 \leq l\leq n$ let $\psi_l(w) = w$.
When $1 \leq l \leq n-m$,
let $x_j$ denote the $l$-th letter of $w$, equal to 0, when $w$
is read from left to right.  Consider the following three cases (remember
$x_0 = x_{n+1} = +\infty$ by convention):
\begin{enumerate}
  \item[(1')]  $x_{j-1}$, $x_{j+1}$ both non-subexcedent;
  \item [(2')]$x_{j-1}$  subexcedent,  $x_{j+1}$\ns  or  $x_{j-1}$,
  $x_{j+1}$ both  subexcedent with $x_{j-1}>x_{j+1}$;
  \item[(3')] $x_{j-1}$ \ns , $x_{j+1}$   subexcedent  or   $x_{j-1}$,
  $x_{j+1}$  both   subexcedent with $x_{j-1}<x_{j+1}$.
\end{enumerate}
When case (1') holds, let $\psi_l(w) := w$.
When case (2') holds, determine the smallest integer k $\leq j -1$ such that
$ x_k  < \cdots <x_{j-1}<\rank(j-1) $
so that $w = x_1 \cdots x_{k-1} x_k \cdots x_{j-1}  0  x_{j+1}  \cdots x_n$
and define: $\psi_l(w) := x_1 \cdots x_{i-1}  0  x_{i} \cdots x_{j-1} x_{j+1} \cdots x_n.$
When case (3') holds, determine the greatest integer $k \geq j+1$ such that
$\rank(j+1)> x_{j+1} > x_{j+2} > \cdots > x_k $ so that
$w = x_1 \cdots x_{j-1} 0 x_{j+1} \cdots x_k x_{k+1} \cdots x_n$
and define:$
\psi_l(w) := x_1 \cdots x_{j-1} x_{j+1} \cdots x_k  0  x_{k+1} \cdots x_n.$
As shown in \cite{fh08}, the product  ${\bf\Phi}^{-1}:=\psi_n\psi_{n-1}\cdots  \psi_1$ is the
inverse bijection of $\bf \Phi$.
\subsection{Foata-Han's second transformation $\F$}
The bijection ${\bf F}$  maps each shuffle
class $\Sh(0^{n-m}v)$ with $v$ an arbitrary word of length~$m$
$(0\le m\le n)$ onto itself. When $n=1$ the unique element of the
shuffle class is sent onto itself.
Also let ${\bf F}(w)=w$ when $\des(w)=0$.
Let $n\ge 2$ and assume that~${\bf
F}(w')$ has been defined for all words $w'$ with nonnegative
letters, of length $n'\le n-1$. Let
$w$ be a word of length~$n$ such that $\des(w)\geq 1$. We may write
$$
w=w'a0^rb,
$$
where $a\ge 1$, $b\ge 0$ and $r\ge 0$. Three cases are
considered:
$$
(1) \quad a\le b;\quad (2) \quad a>b, \quad r\ge 1;\quad (3) \quad a>b, \quad r=0 .
$$
In case~(1) define ${\bf F}(w)={\bf F}(w'a0^rb):=
({\bf F}(w'a0^r))b$. In case~(2) we may write ${\bf F} (w'a0^r)=w''0$ and then define
\begin{align*}
\gamma\,{\bf F}(w'a0^r)&:=0w'';\\
{\bf F}(w)={\bf F}(w'a0^rb)&:=(\gamma\,{\bf F}(w'a0^r))b=0w''b.
\end{align*}
In short, add one letter~``0" to the left of ${\bf F}(w'a0^r)$, then
delete the rightmost  letter~``0"  and add $b$ to the right.
In case (3) remember that $r=0$. Write
$$
{\bf F}(w'a)=0^{m_1}x_1v_10^{m_2}x_2v_2\cdots 0^{m_k}x_kv_k,
$$
where $m_1\ge 0$, $m_2,\ldots,m_{k}$ are
all positive, then
$x_1$, $x_2$, \dots~, $x_k$ are positive letters and $v_1$,
$v_2$, \dots~, $v_k$ are words with positive letters, possibly
empty. Then define:
\begin{align*}
\delta\,{\bf F}(w'a)&:=
x_10^{m_1}v_1x_20^{m_2}v_2x_3\cdots x_k0^{m_k}v_k;\\
{\bf F}(w)={\bf F}(w'ab)&:=(\delta\,{\bf F}(w'a))b.
\end{align*}
In short, move each positive letter occurring just after a 0-factor
of ${\bf F}(w'a)$ to the beginning of that 0-factor and add~$b$
to the right.

\subsection{Proof of Theorem \ref{mainthm3}}
We show that  the composition
$\ZDer^{-1}\circ {\bf F}\circ  {\bf \Phi^{-1}}\circ \ZDer$
 satisfies the relations \eqref{m=1}--\eqref{eq:red2}  characterizing
 the  bijection $\Psi$. The proof is based on 
 Lemmas \ref{F}, \ref{fi}
and \ref{phiR2-3},  which  will be proved  in Section~\ref{The proof of three lemmas}.

Let $\tau=\langle\sigma, i_1,i_2,\ldots, i_m\rangle\in \S_{n+m}$, where $\sigma=x_1\ldots x_n\in \D_n$
 is the derangement part of $\tau$. 
Since the positions of fixed points of $\tau$ are the same as 
that of zeros of $\ZDer \tau$, we write 
 $$
 \ZDer \tau =[\sigma, i_1,i_2,\cdots,i_m]
  \in \Sh(0^m\sigma).
 $$
Thus the $k$-th zero from left to right  of  $ \ZDer \tau$
is at the position $i_k+k$  for $k\le m$ and
$i_k$ is  the  number of pillars at the left of the $k$-th zeros  in  $ \ZDer \tau$.
Consequently,  writing  $i:=i_m$ then
\begin{equation}\label{pilier}
  [x_1 x_2 \cdots x_n,i_1,i_2,\cdots,i_m]=  w x_i 0^r x_{i+1} x_{i+2}\cdots x_{n},
\end{equation}
where $w$ is a word with $i-1$ pillars and  $r$  the largest integer
satisfying  $i_{m+1-r}=i$.
The integer  $r$ must be positive  for  the last  zero
is located just at the left of the   $(i+1)$-th  pillar.
\medskip

\example
Let
$\Omega=[3\,1\,4\,5\,2,0,0,0,1,1,2,2,2,{\bf 2}]$. Then
$i=2$, $r=4$, $w=0\, 0\, 0\, 3\, 0\, 0\,$ and
$$
\Omega=w \,1 \,0^4\,4\, 5\, 2
=0\, 0\, 0\, 3\, 0\, 0\, \textbf{1}\, 0\, 0\, 0\, 0\, 4\, 5\, 2.
$$

For brevity we introduce the following notations:   $\forall t\ge0$ and 
$I= (i_1,\cdots,i_m)\in \N^m$, let $I+t=(i_1+t,\cdots,i_m+t)$, 
$t^{[m]}=(t,\cdots,t)\in \textbf{N}^m$.  Moreover,
for any $\omega\in \Sh(0^{n-m}v)$, if 
  $\F(\omega)=[\sigma,i_1,\cdots,i_m]$,  then we write
$$
 \F(\omega)=[\sigma,I_{\omega}]\quad \text{with}\quad  I_{\omega}=(i_1,\cdots,i_m).
$$
Additionally, for any non empty finite word $w$ we denote, respectively,
  by $\First(w)$ and  $\Last(w)$ the first and last letter  of $w$ when $w$ is read from left to right.
\begin{lem}\label{F}  Let
 $w_1$,  $w_2$  be two non empty words
such that  $\Last(w_1)>0$ and $\zero(w_2)=0$.  Assume that ($\Last(w_1),\,\First(w_2))=(a,b)$
	and $\pil(w_1)=\nu$, $\des(w_2)=t$.
Let $\mu=w_10^r w_2$ with $r\ge0$.
Then
 \begin{align}\label{3.1}
  \zero(w_1)\not=0\Longrightarrow    I_{\mu}=    \begin{cases}
       (t^{[r]},\,I_{w_1}+t+1),  &  \textrm{if $a>b$},\\
         I_{w_1}+t ,&  \textrm{if  $a<b$ and $r=0$},\\
           (t^{[r-1]},\,I_{w_1}+t+1,\,\nu+t), &\textrm{if $a<b$ and $r>0$};
      \end{cases}
    \end{align}
and 
    \begin{align}\label{3.2}
\zero(w_1)=0\Longrightarrow   I_{\mu}=    \begin{cases}
       \emptyset,  &  \textrm{ if  $r=0$},\\
         (t^{[r]}) ,&  \textrm{if   $r>0$ and $a>b$},\\
        (t^{[r-1]},\nu+t), &\textrm{if  $r>0$ and $a<b$}.
      \end{cases}
    \end{align}
 \end{lem}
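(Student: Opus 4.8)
The plan is to reduce the whole computation to tracking one combinatorial datum. For a word $u$ in a shuffle class write $J(u)=(j_1,\dots,j_p)$ for the tuple recording, for each $0$ of $u$ read from left to right, the number of pillars lying to its left; thus $I_\omega=J(\F\omega)$ in the notation of the statement, and since the pillars are the distinct letters of a derangement we always have $a\neq b$, so the case splits of \eqref{3.1}--\eqref{3.2} are exhaustive. Four elementary moves suffice to describe how $\F$ builds its output, and each has a transparent effect on $J$: appending a pillar at the right end leaves $J$ unchanged; appending a $0$ at the right end appends the entry $\nu$ (the total number of pillars) to $J$; prepending a $0$ at the left end prepends the entry $0$; and deleting the rightmost letter when it is a $0$ deletes the last entry of $J$. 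The operator $\gamma$ and the terminal ``$\cdot b$'' of cases (1)--(3) are compositions of these moves (e.g. $\gamma$ deletes the last entry of $J$ and then prepends a $0$). I would first record these effects, which are immediate from the definition of $J$.

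The one move that touches every entry is $\delta$, and the heart of the proof is the claim that, whenever a word has at least one $0$, applying $\delta$ raises \emph{every} entry of $J$ by exactly $1$. This follows by inspecting the decomposition $0^{m_1}x_1v_1\cdots 0^{m_k}x_kv_k$ defining $\delta$: for each zero-block $0^{m_i}$ the distinguished pillar $x_i$ is moved from just after the block to just before it, so each $0$ of that block gains exactly the one pillar $x_i$ to its left, while no other letter changes sides relative to it. Granting this, I would compute $I_{w_10^r}=J(\F(w_10^r))$ for trailing zeros by induction on $r$, using case (3) of $\F$ once (for $r=1$: apply $\delta$, then append $0$) and case (2) thereafter (for $r\ge2$: apply $\gamma$, then append $0$). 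The outcome is the closed form $I_{w_10^r}=(0^{[r-1]},\,I_{w_1}+1,\,\nu)$ when $\zero(w_1)\neq0$ and $I_{w_10^r}=(0^{[r-1]},\,\nu)$ when $\zero(w_1)=0$, for $r\ge1$; in particular $\F(w_10^r)$ ends in a $0$, which legitimises the use of $\gamma$ in case (2).

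With these two ingredients the Lemma follows by induction on $|w_2|$. For the base case $|w_2|=1$ one has $w_2=b$ and $t=\des(w_2)=0$; writing $\mu=w_10^rb$ in the form $w'a0^rb$ and running the three cases of $\F$ (case (1) if $a<b$; case (2) if $a>b,\ r\ge1$; case (3) if $a>b,\ r=0$), together with the trailing-zero formula when $r\ge1$, yields exactly the $t=0$ specialisations of \eqref{3.1} and \eqref{3.2}. For the inductive step, write $w_2=w_2'c$ with $c=\Last(w_2)$; since $w_2$ has no $0$'s, $\mu$ equals $(w_10^rw_2')\,c$ with the pillar $a'=\Last(w_2')$ sitting immediately before $c$, so only cases (1) and (3) of $\F$ can occur. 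If $a'<c$ (an ascent, so $\des(w_2)=\des(w_2')$) case (1) merely appends the pillar $c$ and leaves $J$ unchanged, matching the formulas with $t$ unchanged; if $a'>c$ (a descent, so $\des(w_2)=\des(w_2')+1$) case (3) applies $\delta$, raising every entry by $1$, and then appends $c$, which converts the formula for $t'=t-1$ into the formula for $t$. Thus each descent of $w_2$ contributes precisely one $\delta$, producing the uniform shift by $t$ recorded in \eqref{3.1}--\eqref{3.2}.

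The main obstacle is purely the bookkeeping of verifying that the structural preconditions of $\gamma$ and $\delta$ hold at every invocation. Concretely, $\gamma$ needs its input to end in a $0$ (guaranteed by the trailing-zero computation, where $\F(w_10^r)$ ends in $0$), while $\delta$ needs its input to end in a pillar so that the decomposition $0^{m_1}x_1v_1\cdots0^{m_k}x_kv_k$ is valid; the latter is the fact, due to Foata and Han~\cite{fh08}, that $\F$ sends a word ending in a positive letter to a word ending in a positive letter, which I would invoke at each place $\delta$ is used (there the input $w_1$, respectively $w_10^rw_2'$, indeed ends in a pillar). The degenerate situation $\des(\mu)=0$ forces $r=0$, whence $\F(\mu)=\mu$, and is checked directly to agree with the $r=0$ clauses. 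The remaining steps are the routine substitutions $t\mapsto t+1$, $(\,\cdot\,)+1$, and the tuple concatenations already displayed above.
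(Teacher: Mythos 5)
Your proposal is correct and follows essentially the same route as the paper's proof: both reduce everything to the effect of the elementary moves (appending a pillar, appending/prepending a zero, $\gamma$, and $\delta$) on the tuple of zero-positions, with the key observation that $\delta$ shifts every entry by exactly $1$, and then build $\F(w_10^rw_2)$ by first handling the trailing zeros and then absorbing $w_2$ letter by letter so that each descent of $w_2$ contributes one $\delta$. The only difference is one of presentation: you organize the second stage as a formal induction on $|w_2|$ and explicitly check the preconditions of $\gamma$ and $\delta$, whereas the paper carries out the first case ($\zero(w_1)\neq 0$, $a>b$) in full and states that the remaining cases are proved similarly.
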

 Let $\sigma=x_1x_2\cdots x_n\in S_n$.
The color of
 a \emph{slot}  $(i,i+1)$ of $\sigma$  can be  characterized (see \cite{chz97}) as follows:
\begin{itemize}
\item The  slot  $(i,i+1)$ of  $\sigma$ is green  if and only if
one of the following conditions is satisfied:
\begin{align}\label{eq:G}
 (G_1)\quad  x_i > x_{i+1}>i;\qquad  (G_2)\quad  x_i<i<x_{i+1};\qquad (G_3)\quad i>x_i > x_{i+1}.
\end{align}
\item The slot  $(i,i+1)$ of $\sigma$   is  red \, if and only if
one of the following conditions is satisfied:
\begin{align}\label{eq:R}
 (R_1)\quad  i<x_i <x_{i+1};\qquad  (R_2)\quad  x_i>i\ge x_{i+1};\qquad  (R_3)\quad x_i < x_{i+1}\le i.
\end{align}
\end{itemize}
By convention   $x_0=x_{n+1}=+\infty$.
If we denote by $d_i$  the  number of  descents of $\langle\sigma,i\rangle$ at right of
 $i$ ($\sigma\in \mathcal{D}_n$) then Lemma~\ref{lem clark} implies that  $g_i=d_i$ if the
 $i$-th   slot is green,  and
  $g_i=d_i+i$ if the slot is red.

\begin{lem}\label{t=di}
 Let $\sigma=x_1x_2\cdots x_n\in S_n$, let $(i,i+1)$ be the $j$-th slot of $\sigma$ and
$t_i=\des(x_{i+1}\cdots x_n)$  ($1\leq i\leq n$)  then
$$
d_j=\left\{\begin{array}{ll}
t_i&\text{ in the cases \,} \, G_1, \, G_2,\,  \text{or} \, R_1; \\
t_i+1&\text{ in the cases\, } \, G_3, \, R_2,\,  \text{or} \, R_3.
          \end{array}\right.
$$
\end{lem}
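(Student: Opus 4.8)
The plan is to prove Lemma~\ref{t=di} by unpacking the definition of $d_j$ and relating it directly to the descent statistic of the suffix $x_{i+1}\cdots x_n$. Recall that $d_j$ counts the descents of $\langle\sigma,i\rangle$ that lie strictly to the right of the inserted letter $(i+1)$, where by \eqref{jslot} we have
$$
\langle\sigma,i\rangle=x_1'\cdots x_i'\ (i+1)\ x_{i+1}'\cdots x_n',
$$
with $x'=x$ for $x\le i$ and $x'=x+1$ for $x>i$. First I would observe that the substitution $x\mapsto x'$ is order-preserving on $[n]$, so the descents occurring \emph{within} the block $x_{i+1}'\cdots x_n'$ are in bijection with the descents within $x_{i+1}\cdots x_n$; this accounts for exactly $t_i=\des(x_{i+1}\cdots x_n)$ descents. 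The only remaining position to examine is the junction between the inserted letter $(i+1)$ and its right neighbour $x_{i+1}'$. Thus $d_j=t_i$ or $d_j=t_i+1$ according to whether or not this junction is a descent, i.e.\ according to whether $(i+1)>x_{i+1}'$.

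The core of the argument is therefore to determine, in each of the six cases $G_1,G_2,G_3,R_1,R_2,R_3$ defined in \eqref{eq:G}--\eqref{eq:R}, whether $(i+1)>x_{i+1}'$. Here I must be careful to split on whether $x_{i+1}\le i$ (so $x_{i+1}'=x_{i+1}$) or $x_{i+1}>i$ (so $x_{i+1}'=x_{i+1}+1$). I would go through the cases systematically. In $G_1$ we have $x_{i+1}>i$, hence $x_{i+1}'=x_{i+1}+1>i+1$, so the junction is a rise and $d_j=t_i$. In $G_2$ we have $i<x_{i+1}$, giving $x_{i+1}'=x_{i+1}+1>i+1$, again a rise, so $d_j=t_i$. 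In $R_1$ we have $i<x_i<x_{i+1}$, so once more $x_{i+1}'>i+1$ and $d_j=t_i$. By contrast, in $G_3$ we have $x_{i+1}<i$, so $x_{i+1}'=x_{i+1}<i+1$, making the junction a descent and $d_j=t_i+1$. In $R_2$ we have $x_{i+1}\le i$, so $x_{i+1}'=x_{i+1}\le i<i+1$, again a descent, so $d_j=t_i+1$. Finally in $R_3$ we have $x_{i+1}\le i$, giving the same conclusion $d_j=t_i+1$. Assembling these six verifications yields precisely the claimed dichotomy.

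The one point demanding attention—and the place where I expect the only real subtlety—is the boundary behaviour at the right end, where the convention $x_{n+1}=+\infty$ is in force. When $i=n$ there is no block to the right and one should check that the formula degenerates correctly; similarly I would confirm that the classification into the six cases is exhaustive and mutually exclusive given this convention, so that every slot falls under exactly one case. Beyond this bookkeeping, the proof is a routine case analysis driven entirely by the order-preserving nature of the substitution \eqref{jslot} and the single comparison at the insertion junction; no generating-function or inductive machinery is needed, since Lemma~\ref{lem clark} has already been invoked to translate $g_i$ into $d_i$ (green) or $d_i+i$ (red).
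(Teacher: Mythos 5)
Your proposal is correct and follows essentially the same route as the paper's proof: the order-preserving substitution $x\mapsto x'$ keeps the $t_i$ descents of the suffix intact, and the only question is whether the junction $(i+1,x_{i+1}')$ is a descent, which happens exactly when $x_{i+1}$ is subexcedent (cases $G_3$, $R_2$, $R_3$) and not when it is excedent (cases $G_1$, $G_2$, $R_1$). The paper states this in one sentence in the exceedant/subexcedant language, while you spell out the six comparisons explicitly; the content is identical.
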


\begin{proof}
Clearly, in the cases  $G_1$, $G_2$,  $R_1$ the value  $x_{i+1}$ is  an exceedant of
 $\sigma$,  so $i+1$ is not a descent  place of $\langle\sigma,j\rangle$
 while in the cases  $G_3$, $R_2$  and $R_3$ the value $x_{i+1}$ is  a sub-exceedant,
 so $i+1$ is a descent place of $\langle\sigma,j\rangle$.
\end{proof}

Let   $\tau=\langle\sigma,i_1,i_2,\cdots,i_m\rangle \in S_{n+m}$ and
$\Omega=\ZDer\tau=[\sigma,i_1,\cdots,i_m]$.
We distinguish two cases according  to the color of the insertion slot.

\subsection{The $i_m$-{th} slot of $\sigma$ is green}
Define
$$\omega=[\sigma, i_1,\ldots, i_{m-1}],\quad
 \omega'= {\bf \Phi}^{-1}(\omega)\quad\text{and} \quad
 \Omega'={\bf \Phi}^{-1}(\Omega).
 $$
We must check (\ref{m=1}) and for $m>1$,
 \begin{align}\label{V}
 {\bf F} \circ{\bf \Phi^{-1}}(\omega)=[\sigma,j_2,\cdots,j_{m}]\Longrightarrow
{\bf F}\circ {\bf \Phi^{-1}}(\Omega)=[\sigma,g_{i_m},j_2,\cdots,j_{m}],
\end{align}
which  is equivalent to
\begin{equation}\label{IV}
I_{\Omega'}=(g_{i_m}, I_{\omega'}).
\end{equation}
\begin{lem}\label{fi}
We have the following factorizations:
$$
\omega'=w_10^rw_20^{r'}w_3 \quad\text{ and}
\quad \Omega'=w_10^{r+1}w_20^{r'}w_3
\quad (r,\,r'\ge0),
$$
where  $w_1\not=\emptyset$,  $w_2\not=\emptyset$ and  $w_3$  are  words on non negative integers. 
Moreover, if $(a,b,a',b')=(\Last(w_1),\,\First(w_2),\, \Last(w_2),\, \First(w_3))$, then
the following properties hold true:
   \begin{itemize}
  \item[ i)] $\zero(w_2w_3)=0$, and  $\Last (w_1)>0$,
  \item[ii)] if $r=0$,   then $ a>b$,
  \item[ iii)] if $r'>0$, then $r'=1$,  $a'<b'$ and  $r=0$,
  \item[ iv)] $\des(w_20^{r'}w_3)=g_{i_m}$.
\end{itemize}
\end{lem}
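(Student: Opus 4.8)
The plan is to compute $\Phi^{-1}(\omega)$ and $\Phi^{-1}(\Omega)$ directly from the definition $\Phi^{-1}=\psi_m\psi_{m-1}\cdots\psi_1$, exploiting that $\omega$ and $\Omega$ differ in the simplest possible way. By \eqref{pilier} the word $\Omega=[\sigma,i_1,\ldots,i_m]$ reads $w\,x_i\,0^{r}\,x_{i+1}\cdots x_n$, where $0^{r}$ is the maximal run of zeros sitting just before the pillar $x_{i+1}$; since $i_m=i$, dropping the last insertion deletes one zero of this run, so $\omega=w\,x_i\,0^{r-1}\,x_{i+1}\cdots x_n$. The two words thus coincide except for a single zero between $x_i$ and $x_{i+1}$. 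As $\Omega$ has $m$ zeros and $\omega$ has $m-1$, the map $\Phi^{-1}$ equals $\psi_{m-1}\cdots\psi_1$ on $\omega$ and carries one extra factor $\psi_m$, applied last, on $\Omega$.

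The heart of the argument is to show that $\psi_1,\ldots,\psi_{m-1}$ act compatibly on the two words and that $\psi_m$ then transports the surplus zero into the first terminal block, giving $\omega'=w_1\,0^{r}\,w_2\,0^{r'}\,w_3$ and $\Omega'=w_1\,0^{r+1}\,w_2\,0^{r'}\,w_3$. Two structural features of the $\psi_l$ drive this. Each $\psi_l$ relocates only the $l$-th zero---as with the analogous remark for $\phi_l$---sliding it across a monotone run of subexcedent (hence positive) letters, so it never crosses another zero and the labelling of zeros stays consistent. And a zero is non-subexcedent, so it can only act as a barrier that terminates such a run. Using these, I would track, block by block, how the surplus zero sits in a pure-zero segment throughout the lower maps and is finally moved by $\psi_m$ alone.

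With the factorizations in hand, I would read off properties (i)--(iv) from the green trichotomy \eqref{eq:G} for the slot $(i,i+1)$. Each of $G_1,G_2,G_3$ fixes whether $x_i$ and $x_{i+1}$ are excedent or subexcedent and fixes their relative order, which pins down how $\psi$ settles the zeros immediately around them. Property (i), namely $\Last(w_1)>0$ and $w_2,w_3$ zero-free, records the resulting simple shape of the tail; properties (ii) and (iii)---the descent $a>b$ when $r=0$, and the forcing of $r'=1$, $a'<b'$, $r=0$ whenever a second block occurs---come from inspecting the three green cases one at a time while tracking the ascending and descending subexcedent runs that $\psi$ scans. Finally (iv), $\des(w_2\,0^{r'}w_3)=g_{i_m}$, follows by combining this descent bookkeeping with Lemma~\ref{t=di}, which equates the number of descents of $\langle\sigma,i\rangle$ to the right of $i$ with $\des(x_{i+1}\cdots x_n)$ up to the $+1$ dictated by the subcase, and with the green identity $g_{i_m}=d_{i_m}$.

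The step I expect to be the main obstacle is precisely the compatibility claim of the second paragraph, which does not hold verbatim. A scan of some $\psi_l$ with $l\le m-1$ can reach the site of the surplus zero only by crossing the pillar $x_i$ from the left, and this is exactly where the green hypothesis intervenes: in $G_1$ the excedent $x_i$ blocks every such scan, whereas in $G_2$ and $G_3$ one must check that the excedence of $x_{i+1}$, respectively the monotonicity $i>x_i>x_{i+1}$, makes the scans in $\omega$ and $\Omega$ terminate in matching fashion. A second source of difficulty is the repeated-insertion regime $r>1$, where the boundary zeros of the block are shared between consecutive maps; here one checks that the interior zeros are frozen under case $(1')$ and only the boundaries migrate. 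I would organize the whole analysis as an induction on $r$, treating $r=0$, $r=1$ and $r>1$ separately, and then feed the resulting factorizations into Lemma~\ref{F} to derive \eqref{IV}.
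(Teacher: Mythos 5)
Your plan follows the same route as the paper's proof: write $\Omega=[\sigma,i_1,\ldots,i_m]$ and $\omega=[\sigma,i_1,\ldots,i_{m-1}]$ via \eqref{pilier}, apply the maps $\psi_l$ one at a time using the facts that $\psi_l$ moves only the $l$-th zero and that zeros act as barriers for the scans, split according to the green conditions \eqref{eq:G}, and obtain iv) from Lemma~\ref{t=di}. The genuine gap is in your mechanism for $G_3$. You propose to show that the scans of $\psi_l$, $l\le m-1$, ``terminate in matching fashion'' in $\omega$ and $\Omega$, after which $\psi_m$ transports the surplus zero into the block $0^{r+1}$. In the configuration that is responsible for item iii) --- a decreasing subexcedent run $x_k>\cdots>x_i>x_{i+1}>\cdots>x_h$ with the $(m-1)$-st zero sitting just left of $x_k$ once the earlier maps have acted --- this is impossible: acting on $\omega$, the case (3') scan of $\psi_{m-1}$ carries that zero across $x_{i+1}$ all the way past $x_h$, whereas acting on $\Omega$ the very same scan is stopped just after $x_i$ by the surplus zero; by the no-crossing property you yourself invoke, the $(m-1)$-st zero of $\Omega$ can never end up to the right of the $m$-th one, so ``matching termination'' cannot be restored. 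What actually happens (the paper's third case, which introduces $\widetilde{\widetilde{\Omega}}=\psi_{m-2}\circ\cdots\circ\psi_1(\Omega)$ and then applies $\psi_{m-1}$ and $\psi_m$ in turn) is a relay: the $(m-1)$-st zero of $\Omega$ is stranded between $x_i$ and $x_{i+1}$ --- this creates the block $0^{r+1}$ with $r=0$ --- and then $\psi_m$ moves the surplus zero \emph{out of} that slot, down the run $x_{i+1}>\cdots>x_h$, creating the second block $0^{r'}$ with $r'=1$ and $a'=x_h<x_{h+1}=b'$. This relay is exactly the content of iii) ($r'>0\Rightarrow r'=1$, $r=0$, $a'<b'$); a proof organized around ``compatibility plus transport into the first block'' cannot produce it, nor the factorization of $\Omega'$ in that case.

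A secondary but concrete error: in the repeated-insertion regime, your claim that ``the interior zeros are frozen under case (1') and only the boundaries migrate'' fails in $G_2$ and $G_3$. Since the $\psi_l$ are applied in increasing order of $l$, each zero of the block becomes in its turn the leftmost remaining zero, with subexcedent left neighbour $x_i$; case (2') then applies and it migrates left. For instance, take $\sigma=3\,4\,2\,1\,6\,5$ (slot $4$ is green, of type $G_2$) and $\omega=[\sigma,4,4,4]=3\,4\,2\,1\,0\,0\,0\,6\,5$: one computes $\omega'=3\,4\,2\,0\,0\,0\,1\,6\,5$, so all three zeros, the interior one included, cross $x_4=1$ (this drainage is what forces the paper's second case, where $\psi_m$ acts by a case (2') transport; note also that, because of it, the $r$ of the lemma is not the $r$ of \eqref{pilier} --- already in $G_1$ they differ by one). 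So your outline would need to be rebuilt around the paper's actual case division --- $G_1$ or ($G_2$ with a zero landing after $x_i$), where everything near the slot is frozen; ($G_2$ with drainage) or ($G_3$ without the long run), where $\psi_m$ is a leftward transport; and $G_3$ with the relay --- before the factorizations can be fed into Lemma~\ref{F}.
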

  Notice that  if $r'=1$ then $w_3\not=\emptyset$ by iii).
Let $w'_2=w_2w_3$.
If $m=1$  then  $\omega=\omega'=\sigma$ and $\Omega=[\sigma,i_1]$.
Hence  $\zero(\omega)=0$ and $r=r'=0$.
Thus $\omega'=w_1w'_2$ and $\Omega'=w_10w'_2$,
where $a>b=\First (w'_2)$ and
 $\des(w'_2)=g_{i_1}$. It  follows then from
   \eqref{3.2} that
\begin{equation}\label{m=1 vert}
 I_{\Omega_1'}=(g_{i_1}).
\end{equation}
We now prove  (\ref{IV})  for  $m>1$.
\begin{itemize}
\item[ 1)]  If $r'=0$  then
  \begin{equation}\label{r'=0 omega'-Omega'}
    \omega'=w_10^rw'_2 \qquad \text{and }\qquad \Omega'=w_10^{r+1}w'_2,
  \end{equation}
where   $\des(w'_2)=t=g_{i_m}$ and
 $\zero(w'_2)=0$. As
 $m>1$ we cannot have
   $\zero(w_1)=0$ and $r=0$ simultaneously. It remains to verify  the following two cases:
   \begin{itemize}
\item[i)] $\zero(w_1)\not=0$,   $a>b$ or $r>0$ and $a<b$,
\item[ii)] $\zero(w_1)=0$, $r>0$ and  $a\neq b$.
\end{itemize}
\noindent Applying Lemma \ref{F} to either case yields
 \begin{equation}\label{I Omega'-omega'}
   I_{\Omega'}=(t,I_{\omega'})=(g_{i_m},I_{\omega'}).
\end{equation}
\item[2)] If $r'=1$,  then  $a'<b'$, $r=0$ and $a>b$. Hence
$$
 \omega'=v0w_3\qquad \text{and}\qquad \Omega'=V0w_3,
$$
where $v=w_1w_2$ and $ V=w_10w_2$.
Let  $t_2=\des(w_2)$,\,$t_3=\des(w_3)$ and  $t=\des(w_20w_3)=g_{i_m}$, $\nu=\pil(v)=\pil(V)$.
If $\zero(w_1)\not=0$,  as  $a>b$,  by  \eqref{3.1} we have 
\begin{equation}\label{pi w_1}
 I_{v}=I_{w_1}+t_2+1 \quad\text{and}\quad I_{V}=(t_2,I_{w_1}+t_2+1)=(t_2,I_{v}).
 \end{equation}
Since  $\zero(v)>0$, $\zero(V)>0$ and  $a'<b'$,  by  \eqref{3.1} we have 
\begin{equation}\label{omega' pi}
  I_{\omega' }=(I_{v}+t_3+1,\nu+t_3) \quad\text{and}\quad I_{\Omega' }=(I_{V}+t_3+1,\nu+t_3).
 \end{equation}
It follows that $I_{\Omega' }=(t_2+t_3+1,I_{\omega'})$
and (as $a'<b'$)  $t=t_2+t_3+1$.
Now, if $\zero(w_1)=0$ then, by  \eqref{3.2},
\begin{equation}\label{Pi=t}
I_{\omega' }=(\nu+t_3) \qquad\text{and}\qquad I_{V}=(t_2).
 \end{equation}
As  $\zero(V)>0$,  it follows from   (\ref{omega' pi})   and   (\ref{Pi=t})  that
$ I_{\Omega' }=(t,\nu+t_3)=(t,I_{\omega' })$.
 \end{itemize}
\subsection{The $i_m$-th slot of  $\sigma$ is red}
Let $k$ be the largest integer such that
$i_{m-k+1}=i_m$. Define
  $$
  \omega=[\sigma,i_1,\cdots,i_{m-k}], \quad
  \omega'= {\bf \Phi^{-1}}(\omega),  \quad \Omega'= {\bf \Phi^{-1}}(\Omega)\quad\text{and}\quad
  \widetilde{\Omega}=\psi_{m-k}\cdots\psi_1(\Omega).
  $$
For  $k=m$  the relation  (\ref{eq:red1})  is equivalent to
\begin{equation}\label{eq:red1bis}
I_{\Omega'}=((g_{i_m}-i_m)^{[m-1]},g_{i_m}),
\end{equation}
which  corresponds to   (\ref{m=1}) when $m=1$.
For $k<m$  the relation  (\ref{eq:red2})   is equivalent to
\begin{equation}\label{eq:red2bis}
I_{\Omega'}=((g_{i_m}-i_m)^{[k-1]},I_{\omega'}+1,g_{i_m}).
\end{equation}
 We now verify (\ref{eq:red1bis}) and
  (\ref{eq:red2bis}).
  Recall that  $\sigma=x_1\ldots x_n$.  For convenience,  we use write $i:=i_m$ in what follows.  
  
  \subsubsection{ \bf Assume that $x_{i+1}$ is excedent.}
This corresponds to the case  ($R_1$), i.e.,
  $x_i$ and $x_{i+1}$ are both excedent and  $x_i<x_{i+1}$. Hence,  all the zeros of $\omega$ are at the left of
   $x_i$ and    all the zeros of  $\omega'$  remains at the left of $x_i$.  It follows that
\begin{equation}\label{R1}
    \omega'=w_1x_ix_{i+1}\cdots x_n
    \quad \text{and} \quad \widetilde{\Omega}=w_1x_i0^kx_{i+1}\cdots x_n
\end{equation}
Besides, as the map $\psi_j$ is identity for
 $m-k+1\le j\le m$,  we have  $\Omega'= \widetilde{\Omega}$.
Let  $t=\des(x_{i+1}\cdots x_n)$. By Lemma \ref{t=di}  we have   $t=g_{i}-i$.\\
If $\zero(w_1)=0$ then
 $m=k$. From \eqref{3.2} we derive 
  $ I_{\Omega'}=(t^{[k-1]},i+t)=((g_i-i)^{[m-1]},g_i) $, which is  (\ref{eq:red1bis}).
If $\zero(w_1)\not=0$, by \eqref{3.1} we have
$$
I_{\omega'}=(I_{w_1x_i}+t) \qquad \text{and}\qquad I_{\Omega'}=(t^{[k-1]},I_{w_1x_i}+t+1,i+t),
 $$
which is  precisely (\ref{eq:red2bis}).

\subsubsection{\bf  Assume that $x_{i+1}$ is subexcedent.}
 We need the following result.
\begin{lem}\label{phiR2-3}
Let $h$ be the largest integer such that
$i+1>x_{i+1}>\cdots>x_h$ and  $\ell=i+1$ if $x_i$ is excedent or the last zero of $w'$ is located between $x_i$ and $x_{i+1}$   otherwise $\ell$  be  the smallest integer such that
 $x_{\ell}<\cdots< x_{i+1}<i+1$ and that
 $\omega'$ does not contain \emph{zero} at the right of $x_{\ell}$.
Set $T=\des(x_\ell \cdots x_n)$; $w_3=x_{h+1}\cdots x_n$; $t'=\des(w_3)$,
then we have
  \begin{align}\label{omega red}
\omega'&=w_1x_{\ell-1}0^rx_\ell\cdots x_hw_3,\\
\Omega'&=w_1x_{\ell-1}0^{r+k-1}x_\ell\cdots x_h0w_3.\label{Omega red}
  \end{align}
Moreover, the following identities hold true:
\begin{itemize}
\item [\emph{i})]  $(r=0\;\text{and} \;x_{\ell-1}>x_\ell)$  or $(r=1\quad  \text{and} \quad
x_{\ell-1}<x_\ell)$,
\item [\emph{ii})]  $T+1=g_{i}-i$,
\item [\emph{iii})]  $h+t'=g_{i}$.
\end{itemize}
\end{lem}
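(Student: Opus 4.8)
The plan is to compute $\mathbf{\Phi}^{-1}(\omega)=\omega'$ and $\mathbf{\Phi}^{-1}(\Omega)=\Omega'$ explicitly and then read off the three identities from the descent structure. First I would record the shapes of $\omega$ and $\Omega$ \emph{before} applying $\mathbf{\Phi}^{-1}$. Since the $i_j$ are weakly increasing and $k$ is maximal with $i_{m-k+1}=i_m=i$, we have $i_1\le\cdots\le i_{m-k}<i$, so all of the first $m-k$ zeros of $\Omega$ lie to the left of the $i$-th pillar $x_i$, while the last $k$ zeros form the block $0^k$ sitting just before $x_{i+1}$, as in \eqref{pilier}. Writing $w$ for the common prefix carrying the pillars $x_1\cdots x_{i-1}$ together with those first $m-k$ zeros, this gives $\omega=w\,x_i\,x_{i+1}\cdots x_n$ and $\Omega=w\,x_i\,0^{k}\,x_{i+1}\cdots x_n$, so that the two input words differ only by the inserted block $0^k$.

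The key structural observation is that the excedent/subexcedent status of a pillar is intrinsic to $\sigma$: the $p$-th pillar has $\rank$ equal to $p$, hence is subexcedent exactly when $x_p<p$, independently of where the zeros are placed. Thus $\omega$ and $\Omega$ share the same pattern of maximal monotone subexcedent runs, and each $\psi_l$ merely slides the $l$-th zero across one such run --- leftward across an ascending run in case $(2')$, rightward across a descending run in case $(3')$ --- without ever crossing another zero, so the left-to-right order of the zeros is preserved throughout $\mathbf{\Phi}^{-1}$. Running $\mathbf{\Phi}^{-1}$ on the two words in parallel, I would track how the ascending run $x_\ell<\cdots<x_{i+1}$ attracts zeros to its left through case $(2')$ while the descending run $x_{i+1}>\cdots>x_h$ can receive one zero to its right through case $(3')$; here the defining property of $\ell$ (no zero of $\omega'$ to the right of $x_\ell$) and the maximality of $h$ pin down exactly where the zeros come to rest. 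The outcome is that the block $0^k$ redistributes into $k-1$ zeros joining the block just before $x_\ell$ and a single zero landing immediately after $x_h$, yielding $\omega'=w_1\,x_{\ell-1}\,0^{r}\,x_\ell\cdots x_h\,w_3$ and $\Omega'=w_1\,x_{\ell-1}\,0^{r+k-1}\,x_\ell\cdots x_h\,0\,w_3$, which are \eqref{omega red}--\eqref{Omega red}. Inspecting the local configuration at $x_{\ell-1},x_\ell$ --- the residual zero before $x_\ell$ survives precisely when $x_{\ell-1}<x_\ell$ --- gives property (i).

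For the two arithmetic identities I would invoke Lemma~\ref{t=di}. Since the $i$-th slot is red with $x_{i+1}$ subexcedent, we are in case $R_2$ or $R_3$ of \eqref{eq:R}; Lemma~\ref{t=di} then shows that $\langle\sigma,j\rangle$ has $t_i+1$ descents to the right of $i$, where $t_i=\des(x_{i+1}\cdots x_n)$, and because the slot is red this means $g_i=i+t_i+1$. Identity (ii) then reduces to $T=t_i$: the letters $x_\ell<\cdots<x_{i+1}$ ascend, so prepending them to $x_{i+1}\cdots x_n$ creates no new descent and $\des(x_\ell\cdots x_n)=\des(x_{i+1}\cdots x_n)$, whence $T+1=t_i+1=g_i-i$. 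Identity (iii) reduces to counting the descents of $x_{i+1}\cdots x_n$: the maximal descending run $x_{i+1}>\cdots>x_h$ contributes $h-i-1$ descents, the maximality of $h$ together with the distinctness of the letters forces $x_h<x_{h+1}$ so there is no descent at that junction, and the tail $w_3=x_{h+1}\cdots x_n$ contributes $t'$; hence $t_i=(h-i-1)+t'$ and $h+t'=i+1+t_i=g_i$.

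The main obstacle is the second paragraph: proving rigorously that exactly $k-1$ of the block zeros migrate to the left of $x_\ell$ and a single one remains to the right of $x_h$, and that the processing of the first $m-k$ zeros produces the \emph{same} prefix $w_1\,x_{\ell-1}\,0^{r}$ in $\omega'$ and in $\Omega'$. This requires a careful case analysis of the maps $\psi_l$ against the ascending and descending runs around position $i+1$, separating the subcase $x_i$ excedent (where $\ell=i+1$ and $x_{\ell-1}=x_i$) from $x_i$ subexcedent, and checking that the leftward-moving zeros never disturb the pillars $x_\ell\cdots x_h$.
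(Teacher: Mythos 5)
Your setup (the shapes of $\omega$ and $\Omega$ before applying $\mathbf{\Phi}^{-1}$, the fact that excedent/subexcedent status of a pillar is intrinsic to $\sigma$) and your derivations of (ii) and (iii) from Lemma~\ref{t=di} are correct and coincide with the paper's own argument. But the core of the lemma is exactly the part you flag as ``the main obstacle'' and then leave undone: the factorizations \eqref{omega red}--\eqref{Omega red} and property (i) are announced, not proved. The paper carries out this analysis, and it hinges on a preliminary fact your proposal never establishes: every zero of $\widetilde{\omega}=\psi_{m-k-1}\circ\cdots\circ\psi_1(\omega)$, and hence the last zero of $\omega'=\psi_{m-k}(\widetilde{\omega})$, lies to the left of $x_{i+1}$ (in case $R_2$ because $x_i$ is excedent; in case $R_3$ by a contradiction argument through the inverse map $\psi_{m-k}^{-1}=\phi_{m-k}$). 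Only with this fact is $\ell$ well defined and is $\widetilde{\Omega}$ equal to $\omega'$ with the block $0^k$ inserted before $x_{i+1}$; one then checks which case of the definition of $\psi_j$ applies at each step $m-k<j\le m$: for $\ell\le i$ the maps $\psi_j$ with $m-k<j<m$ all fall in case $(2')$ (sending $k-1$ zeros left across the ascending run $x_\ell<\cdots<x_i$) and $\psi_m$ falls in case $(3')$ (sending one zero right across $x_{i+1}>\cdots>x_h$), while for $\ell=i+1$ the middle maps are the identity, case $(1')$. Your sketch gestures at this dynamic but never identifies the cases, which is the whole content of the verification.

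The more serious omission is property (i). Your criterion ``the residual zero before $x_\ell$ survives precisely when $x_{\ell-1}<x_\ell$'' does not address why $r\le 1$ at all, i.e., why at most one zero of $\omega'$ can sit immediately before $x_\ell$. This is not a local inspection: the paper proves it by applying $\phi_{m-k}$ to $\omega'$ and observing that if $r>1$, or if $r=1$ with $x_{\ell-1}>x_\ell$, the last zero falls in case (2) of the definition of $\phi$ and would be carried to the right of $x_{i+1}$, contradicting the location of the zeros of $\widetilde{\omega}$ established above (the case $r=0$ forces $x_{\ell-1}>x_\ell$ by the minimality of $\ell$). This inverse-map contradiction is the one genuinely non-mechanical idea in the proof, and it is absent from your proposal; without it, both (i) and the well-definedness of your factorizations are unsupported.
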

\medskip
Let   $t=\des(x_\ell\cdots x_h)$. As $x_h<x_{h+1}$  we have $T=t+t'$.  Write  $\Omega'=V0w_3$ with
 $$
 V=w_1x_{\ell-1}0^{r+k-1}x_\ell\cdots x_h.
 $$
 \begin{itemize}
\item Suppose  that $k=m$. Then
$r=0$,  $\zero(w_1x_{\ell-1})=0$ and
 $x_{\ell-1}>x_\ell$.
\begin{itemize}
  \item If $k=1$ then $\zero(V)=0$. By \eqref{3.2} we have
 $I_{\Omega'}=(h+t')=(g_i)$, which is equivalent to
  (\ref{m=1}).
  \item If $k>1$ then $\zero(V)>0$.  By  \eqref{3.1} we have
  $I_{\Omega'}=(I_{V}+t'+1,h+t')$, while
 \eqref{3.2} yields
   $I_{V}=t^{[k-1]}$. Hence
$I_{\Omega'}=((T+1)^{[k-1]},h+t')$, which is equivalent to  (\ref{eq:red1bis}).
\end{itemize}
\item
Suppose that  $k<m$.
There are three cases:
\begin{itemize}
\item[(a)] $r=0$,   we have $\zero(w_1)\not=0$,
$x_{\ell-1}>x_{\ell}$ and, by \eqref{3.1},
  $$ I_{\omega'}=(I_{w_1x_{\ell-1}}+T+1)\qquad\text{and}\qquad I_{V}=(t^{[k-1]},
   I_{w_1x_{\ell-1}}+t+1).
   $$
\item[(b)] $r=1$ and $\zero(w_1)\not=0$, we have
 $x_{\ell-1}<x_{\ell}$. By \eqref{3.1}
  $$
  I_{\omega'}=(I_{w_1x_{\ell-1}}+T+1,\ell-1+T)\quad\text{and}\quad I_{V}
  =(t^{[k-1]}, I_{w_1x_{\ell-1}}+t+1,\ell-1+t).
  $$
\item[(c)] $r=1$ and $\zero(w_1)=0$, by \eqref{3.2} we have
  $$
   I_{\omega'}=(\ell-1+T)\qquad\text{and}\qquad I_{V}=(t^{[k-1]},\ell-1+t).
  $$
\end{itemize}
  On the other hand,
 in any case,  we have $\zero(V)>0$.  By \eqref{3.1},
  $$
  I_{\Omega'}=(I_{V}+t'+1,h+t')=((T+1)^{[k-1]},I_{\omega'}+1,h+t'),
  $$
  which is equivalent to  \eqref{eq:red2bis}.
\end{itemize}

\example Let  $\tau=1\,5\,3\,4\,2\,7\,6\,8\in S_8$ then  $\tau=\langle\sigma,0,1,1,4\rangle$,
where $\Der\tau=\sigma=2\,1\,4\,3$.
 So $\langle\sigma,0\rangle=1\,3\,2\,5\,4$; $\langle\sigma,1\rangle=3\,2\,1\,5\,4$; $\langle\sigma,2\rangle=2\,1\,3\,5\,4$;
 $\langle\sigma,3\rangle=2\,1\,5\,4\,3$. $\langle\sigma,4\rangle=2\,1\,4\,3\,5$; hence the slots 0, 2, 4 are green, while the slots 1 and 3 are red. Therefore $(g_0,\ldots ,g_4)=(2,3,1,4,0)$, and
 \begin{itemize}
\item slot 0 is green $\Longrightarrow \Psi\langle\sigma,0\rangle=\langle\sigma,g_0\rangle=\langle\sigma,2\rangle$,
\item slot 1 is red $\Longrightarrow\Psi\langle\sigma,0,1,1\rangle=\langle\sigma,g_1-1,2+1,g_1\rangle=\langle\sigma,2,3,3\rangle$,
\item slot 4 is green $\Longrightarrow\Psi\langle\sigma,0,1,1,4\rangle=\langle\sigma,g_4,2,3,3\rangle=\langle\sigma,0,2,3,3\rangle$.
\end{itemize}
Thus $\Psi(\tau)=\langle\sigma,0,2,3,3\rangle=1\,3\,2\,4\,8\,6\,7\,5$.

On the other hand, applying  $\Phi^{-1}$ to  $w:=\ZDer\tau=02001430\in \Sh(0^4\sigma)$, as 2 and 4 are excedent, we get
$$
\begin{array}{rcccccccccll}
  Id && 1 & 2 & 3 & 4 & 5 & 6 & 7 & 8 \\
  w&= & 0 & 2 & 0 & 0 & 1 & 4 & 3 &0 \\
  \psi_1(w)&=&0 & 2 & 0 & 0 & 1 & 4 & 3 &0&\text{Case (1')}\\
  \psi_2( \psi_1(w))&=&0 & 2 & 0 & 0 & 1 & 4 & 3 &0&\text{Case (1')}\\\
  \psi_3( \psi_2( \psi_1(w)))&=&0 & 2 & 0 & 1 & 0 & 4 & 3 &0&\text{Case (3') with $k=5$}\\
  \psi_4( \psi_3( \psi_2( \psi_1(w))))&=&0 & 2 & 0 & 1 & 0 & 4 & 0 &3&\text{Case (2') with $k=7$}.\\
\end{array}
$$
Thus ${\bf\Phi}^{-1}(w)=02010403$. Now we apply $\F$ to  $w=02010403$:
\begin{align*}
\F(02)&=02  &\text{no descent}\\
\F(020)&=\delta(02)0=200 &\text{Case (3)}\\
\F(0201)&=\gamma(200)1=0201  &\text{Case (2)}\\
\F(02010)&=\delta(0201)0=20100  &\text{Case (3)}\\
\F(020104)&=201004\quad  &\text{Case (1)}\\
\F(0201040)&=\delta(020104)0=2104000 &\text{Case (3)}\\
\F(02010403)&=\gamma(2104000)3=02104003 &\text{Case (2)}.
\end{align*}
So  $\F(02010403)=[\sigma,0,2,3,3]$ and  $\ZDer^{-1}([\sigma,0,2,3,3])=\langle\sigma,0,2,3,3\rangle=1\,3\,2\,4\,8\,6\,7\,5$.
Therefore \eqref{eq:FHdecom} is checked.


\section{The proof of three lemmas}\label{The proof of three lemmas}
\subsection{Proof of Lemma \ref{F}}
Recall that
$\delta$ and  $\Upsilon$ are the  transformations   used in the cases
 (b) and (c) of the  algorithm ${\bf F}$.
 Let $\alpha$ a word of length   $\nu$ on the alphabet of positive integers and
  $b>0$.   Let $w=[\alpha,\,i_1,\cdots, i_z]\in \Sh(0^z\alpha)$.
Note that
 $$
 w b=[\alpha b,\,i_1,\cdots, i_z],\quad
  w0=[\alpha,\,i_1,\cdots i_z,\,\nu],\quad 0w=[\alpha,0,\,i_1,\cdots, i_z].
  $$
Hence, if
 $\Last\,(w)=0$, i.e., $i_z=\nu$, then
$\Upsilon[\alpha,\,i_1,\cdots, i_z]=[\alpha,0,\,i_1,\cdots, i_{z-1}]$.
On the other hand,  if
$x_1,\cdots x_n$ are positive integers, then
 $$x_1\cdots x_{k}\,0^r\,x_{k+1}\cdots x_n=[x_1\cdots
  x_n,k^{[r]}].
  $$
It follows that $\delta(w)= [\alpha,\,i_1+1,\cdots, i_z+1]$. Now, consider the word
 $\mu=w_10^rw_2$, where $\Pil(\mu)=\sigma$, $\Pil(w_1)=\alpha$ and
  $|\alpha|=\nu$.
   Since  $w_1$ and $w_2$ are non empty we can write
   $w_1=v_1a$ and $w_2=bv_2$.  Therefore  $\mu=v_1a0^rbv_2$.

 If $\zero(w_1)=z\not=0$ and $a>b$  set
$I_{w_1}=(\ell_1,\cdots,\ell_z)$. Then, we have successively
$$
\F(v_1a0)=\delta(\F(v_1a))0=[\alpha,\ell_1+1,\cdots,\ell_z+1,\nu] \Longrightarrow I_{v_1a0}=(I_{v_1a}+1,\nu),
$$
$$\F(v_1a00)=\Upsilon(\F(v_1a0))0=[\alpha,0,\ell_1+1,\cdots,\ell_z+1,\nu] \Longrightarrow I_{v_1a0^r}=(0^{[r-1]},I_{v_1a}+1,\nu),
 $$
 and  $\F(v_1a0^rb)=\Upsilon(\F(v_1a0^{[r]}))b=[\alpha b,0^r,I_{w_1}+1]$.
Finally, as $\zero(v_2)=0$, we have
$\F(\mu)=[\sigma,t^{[r]},I_{w_1}+1+t]$, which corresponds to the first case of \eqref{3.1}.
 The other cases can be proved similarly.
\subsection{Proof of Lemma \ref{fi}}
Recall that $\Omega=[\sigma,\,i_1,\cdots,i_m]$, where
$$
\sigma=x_0x_1\cdots x_nx_{n+1}, \quad
\omega=[\sigma,\,i_1,\cdots,i_{m-1}], \quad
\Omega'=\f(\Omega), \quad\omega'=\f(\omega).
$$
Set $\widetilde{\omega}=\psi_{m-2}\circ\cdots\circ\psi_1(\omega)$,  $\widetilde{\Omega}=\psi_{m-1}\circ\cdots\circ\psi_1(\Omega)$ and $i=i_m$.
 The last zero of $\omega$ is at the left of
$x_{i+1}$ so is the last zero of
 $\widetilde{\omega}$ by definition of  $\f$.
So all $m-2$ zeros of $\omega'$ are on the left of $x_{i+1}$ and only the last can be on the left or on the right of $x_{i+1}$. Hence  $\omega'$ is of the following form:
$$
\omega'=v_1x_i0^\epsilon x_{i+1}x_{n+2}\ldots x_h0^{r'}x_{h+1}\ldots x_nx_{n+1},
 $$
where $\epsilon\ge0$, and  $0\le r'\le 1$.
Keeping in mind the definition of
$\f$, if $x_{i+1}$  is excedent (the case of $G_1$ or $G_2$) then we have  necessarily $r'=0$ and If $x_{i+1}$  is subexcedent   (the case of $G_3$) then
 it is impossible for the last zero of  $\omega'$ to be between $x_i$ and $x_{i+1}$, otherwise when one applies to $\omega'$ the map $\psi_{m-1}^{-1}$, it corresponds to the case (2) therefore the last \emph{zero} of $\widetilde{\omega} $ would be
   on the right of $x_{i+1} $,  that is impossible. So in case of $G_3$ we have necessarily $r'=1$ or ($r'=0$ and $ \epsilon=0$).    We consider the following three  cases:
  \begin{itemize}
\item   $G_1$ or ($G_2$ and $\epsilon>0$). Since  $r'=0$, hence $\omega'$ and  $\widetilde{\omega} $ are of the following forms:
\begin{align*}
\omega'=v_1x_i0^\epsilon x_{i+1}\ldots x_{n+1},\qquad
\widetilde{\Omega}=v_1x_i0^{\epsilon+1} x_{i+1}\ldots x_{n+1}.
\end{align*}
In this case $\psi_m$ is the identity  therefore $\Omega'=\widetilde{\Omega}$ and then  we have
$$
r=\epsilon; \,r'=0\quad w_1=v_1x_i;\quad w_2=x_{i+1}\cdots x_nx_{n+1};\quad w_3= \emptyset.
$$
If $r=0$ we have $G_1$ so  $a>b$ and by Lemma \ref{t=di},
$\des(w_2)=g_i$.
 \item ($G_2$ and $\epsilon=0$) or ($G_3$ and $r'=0$).
 In this case, let  $k$ be the smallest integer such that $x_k<\cdots<x_i<i$  and  $\omega'$ does not
 contain any zero on the right  of $x_k$. Thus
 $\omega'$ and $\widetilde{\Omega}$ are of the following forms:
     \begin{align*}
     \omega'&=v_1x_{k-1}0^rx_{k}\ldots x_i x_{i+1}\ldots x_{n+1},\\
     \qquad \widetilde{\Omega}&=v_1x_{k-1}0^r x_{k}\ldots x_i0x_{i+1}\ldots x_{n+1}.
     \end{align*}
Since $\Omega'=\psi_m(\widetilde{\Omega})$ and the map $\psi_m$ corresponds to the case (2'),
we have
 $$\Omega'=v_1x_{k-1}0^{r+1} x_{k}\cdots x_i x_{i+1}\cdots x_nx_{n+1}.
 $$
Moreover $r'=0\quad w_1=v_1x_{k-1}$ and $w_2=x_{k}\cdots x_nx_{n+1};\quad w_3= \emptyset$.
As  $\des(x_{k}\cdots x_i)=0$,  we have $\des(w_2)=g_i$ by Lemma \ref{t=di}.
\item  ($G_3$ and $r'=1$). In this case, both $x_i$ and $x_{i+1}$ must be  subexcedances and $x_i>x_{i+1}$ and
  $h$ must  be the largest integer such that  $i+1>x_{i+1}>\cdots>x_h$ (by applying $\phi_{m-1}$ to  $\widetilde{\omega}$).
Let $k$ be the smallest integer such that  $k>x_k>\cdots >x_{i+1}$ and  $\omega'$  contains only one zero on the right   of $x_k$.  Thus we have
$$
\omega'=v_1x_{k-1} 0^\alpha x_{k}\cdots x_i x_{i+1}\cdots x_h0x_{h+1}\cdots x_{n+1}.
$$
On the other hand, the last zero of $\widetilde{\omega}$ is at the left  of $x_{i+1}$,
more precisely just at the left  of $x_k$
because $k>x_k>x_i>x_{i+1}>\cdots>x_h$ by the map $\phi_{m-1}$.
Hence
$$
\widetilde{\omega}=v_1 x_{k-1} 0^{\alpha+1}x_{k}\cdots x_i x_{i+1}\cdots x_hx_{h+1}\cdots x_{n+1}.
$$
Set $\widetilde{\widetilde{\Omega}}=\psi_{m-2}\circ\cdots \circ \psi_1(\Omega)$.
Then
$$
\widetilde{\widetilde{\Omega}}=[\widetilde{\omega},i]=v_1x_{k-1} 0^{\alpha+1}x_{k}\cdots x_i0 x_{i+1}\cdots x_hx_{h+1}\cdots x_{n+1}.
   $$
 When we apply $\psi_{m-1}$ to  $\widetilde{\widetilde{\Omega}}$,
it corresponds to the  case (3'), so
 $$
\widetilde{\Omega}=\psi_{m-1}(\widetilde{\widetilde{\Omega}})=v_1 x_{k-1} 0^{\alpha}x_{k}\cdots x_i00x_{i+1}\cdots x_hx_{h+1}\cdots x_{n+1}.
 $$
Similarly we have
  $$\Omega'=\psi_m(\widetilde{\Omega})=v x_{k-1} 0^{\alpha}x_{k}\cdots x_i0x_{i+1}\cdots x_h0x_{h+1}\cdots x_{n+1}.
   $$
In this situation we take $r=0$ ($r'=1$) and
 $$w_1=v_1x_{k-1} 0^\alpha x_{k}\cdots x_i;\quad w_2= x_{i+1}\cdots x_h;\quad w_3= x_{h+1}\cdots x_{n+1}.
$$
As $x_h<x_{h+1}$ we have   $\des(w_20w_3)=\des(w_2w_3)+1=g_i$ by Lemma~\ref{t=di}.
\end{itemize}

\subsection{Proof of Lemma \ref{phiR2-3}}
Since the  $i$-th slot of $\sigma$ is red and  $x_{i+1}$ is  subexcedent, we are in the situation of  $R_2$ or $R_3$. Recall that all the \emph{zeros} of $\omega$  are located at the left of
  $x_i$ so are all  of $\widetilde{\omega}=\psi_{m-k-1}\circ\cdots\circ\psi_1(\omega)$.
 We show that the last zero of  $\omega'$ is at the left of  $x_{i+1}$.
In the $R_2$ case $x_i$ is excedent and all zeros of $\omega'$ are on the
left of $x_i$. In the $R_3$ case, suppose that the last zero of  $\omega'$ is at the right of
$x_{i+1}$,  then, by applying the reverse mapping
$\psi_{m-k}^{-1}=\phi_{m-k}$, the last
\emph{zero} of $\widetilde{\omega}$ cannot be at the left of
$x_i$ because
 $x_i<x_{i+1}$. This  is absurd. In order to show   (\ref{omega red}) and   (\ref{Omega red})
 set $\Omega_{m-k}=\widetilde{\Omega}$ and for all $j$ such that $m-k<j\le m$ set $\Omega_j=\psi_j(\Omega_{j-1})$.
There are  two cases:
 \begin{itemize}
  \item  If $\ell\le i$ then
$$\omega'= w_1x_{\ell-1}0^rx_\ell\cdots x_{i}x_{i+1} \cdots x_hx_{h+1}w_3,
$$ and
$$
 \widetilde{\Omega}= w_1x_{\ell-1}0^rx_\ell\cdots x_{i} 0^kx_{i+1} \cdots x_hx_{h+1}w_3.
 $$
Noticing  that  $x_i<x_{i+1}<i+1$, so
$x_i$ is subexcedent and the application of $\psi_j$  to   $\Omega_{j-1}$
  corresponds to case  (2')  for all   $m-k<j<m$ and
   $\psi_m$ corresponds to case  (3'). Therefore
$\Omega'= w_1x_{\ell-1}0^{r+k-1}x_\ell\cdots x_ix_{i+1} \cdots   x_h0x_{h+1}w_3$.
  \item  If $\ell=i+1$, then
$\omega'= w_1x_{i}0^rx_{i+1} \cdots x_hx_{h+1}w_3$ and
$\widetilde{\Omega}= w_1x_{i}0^{r+k}x_{i+1} \cdots x_hx_{h+1}w_3$.
 In this case,  $\psi_j$   corresponds to the case
    (1) for all  $m-k<j<m$,  and  $\psi_m$ corresponds to case (3').  So
 $\Omega'= w_1x_{i}0^{r+k-1}x_{i+1} \cdots  x_h0x_{h+1}w_3$.

\end{itemize}
It remains to verify the three
conditions of Lemma \ref{phiR2-3} in the above two cases.
It is clear that $x_{\ell-1}> x_{\ell}$ if $r=0$. Moreover, neither
  $r>1$ nor  ($r=1$ and
 $x_{\ell-1}>x_\ell$) is possible because, otherwise,  when we apply
  $\phi_{m-k}$ to  $\omega'$,
  it corresponds to case  (2), so the last  \emph{zero} of
   $\widetilde{\omega}$
  would be  at the right of  $x_{i+1}$, but this is absurd.  So  the condition $(i)$ is verified.
Besides, as   $\des(x_\ell\cdots x_{i+1})=0$ we have $T=\des(x_{i+1}\cdots x_n)$,
  and by Lemma \ref{t=di},  we derive the  condition $(ii)$. Finally, by definition of
  $h$ we have
   $\des(x_{i+1}\cdots x_h)=h-i-1$, and  $T=\des(x_{i+1}\cdots x_n)=h-i-1+t'$. Thus $h+t'=g_i$.

\medskip
{\bf Acknowledgments}:
This work was  done during  the first author's visit to Institut Camille Jordan, Universit\'{e} Lyon 1 in the fall
 of 2008 and was
supported by  a  scholarship of Agence universitaire de la francophonie.
The second author acknowledges the financial support from la R\'egion Rh\^one-Alpes via the program MIRA recherche 2008, projet: 0803414701.
\goodbreak


\end{document}